\newtheorem{theorem}{Theorem}[section]
\newtheorem{lemma}[theorem]{Lemma}
\newtheorem{proposition}[theorem]{Proposition}
\newtheorem{corollary}[theorem]{Corollary}
\theoremstyle{definition}
\newtheorem{definition}[theorem]{Definition}
\theoremstyle{remark}
\def\N{\mathbb{N}}
\def\Z{\mathbb{Z}}
\def\R{\mathbb{R}}
\def\hN{{{}^*\N}}
\def\hZ{{{}^*\Z}}
\def\hR{{{}^*\R}}
\def\hA{{{}^*\!A}}
\def\hB{{{}^*\!B}}
\def\hY{{{}^*Y}}
\def\B{\mathcal{B}}
\def\D{\mathcal{D}}
\def\U{\mathcal{U}}
\def\V{\mathcal{V}}
\begin{document}

\title[Embeddability properties of difference sets]
{Embeddability properties of
\\
difference sets}

\author{Mauro Di Nasso}

\address{Dipartimento di Matematica\\
Universit\`a di Pisa, Italy}

\email{dinasso@dm.unipi.it}

\date{}

\begin{abstract}
By using nonstandard analysis, we prove embeddability properties
of difference sets $A-B$ of sets of integers.
(A set $A$ is ``embeddable" into $B$ if every finite configuration
of $A$ has shifted copies in $B$.)
As corollaries of our main theorem, we obtain
improvements of results by I.Z.~Ruzsa about intersections
of difference sets, and of Jin's theorem (as refined
by V.~Bergelson, H.~F\"urstenberg and B.~Weiss), where a precise bound is given
on the number of shifts of $A-B$ which are needed to
cover arbitrarily large intervals.
\end{abstract}

\subjclass[2000]
{03H05; 11B05; 11B13.}

\keywords{Nonstandard analysis, Difference sets, Banach density, Finite embeddability}

\maketitle

\section*{Introduction}
In several areas of combinatorics of numbers, diverse non-elementary techniques
have been successfully used, including ergodic theory, Fourier analysis,
(discrete) topological dynamics, and algebra on the space
of ultrafilters (see \emph{e.g.} \cite{fur,be2,hs,tv,gr} and reference therein).
Also \emph{nonstandard analysis} has been applied in this context, starting
from some early work appeared in the last years of the 80s
(see \cite{hir,let}), and recently producing interesting results
in density problems (see \emph{e.g.} \cite{jin2,jin3,jin4}).

\smallskip
An important topic in combinatorics of numbers is the study
of \emph{sumsets} and of \emph{difference sets}. In 2000, R.~Jin
\cite{jin1} proved by nonstandard methods the following beautiful
property: If $A$ and $B$ are sets of natural numbers
with positive \emph{upper Banach density} then the corresponding
sumset $A+B$ is \emph{piecewise syndetic}.
(A set $C$ is piecewise syndetic if it has ``bounded gaps" in arbitrarily
long intervals; equivalently, if a suitable finite union of
shifts $C+x_i$ covers arbitrarily long intervals.
The upper Banach density is a refinement
of the upper asymptotic density. See below for precise definitions.)

\smallskip
Jin's result raised the attention of several researchers, who tried to
translate his nonstandard proof into more familiar terms, and to
improve on it. In 2006, by using ergodic-theoretical tools,
V.~Bergelson, H.~Furstenberg and B.~Weiss \cite{bfw} gave a new proof
by showing that the set $A+B$ is in fact \emph{piecewise Bohr},
a property stronger than piecewise syndeticity.
In 2008, V.~Bergelson, M.~Beiglb\"ock and A.~Fish
found a shorter proof of that theorem, and extended its validity
to countable \emph{amenable groups}. They also showed that
a converse result holds, namely that every piecewise
Bohr set includes a sumset $A+B$ for suitable
sets $A,B$ of positive density (see \cite{bbf}).
This result was then extended by J.T.~Griesmer \cite{gri}
to cases where one of the summands has zero upper Banach density.
In 2010, M.~Beiglb\"ock \cite{bei} found a very short and
neat \emph{ultrafilter} proof of the afore-mentioned piecewise Bohr property.

\smallskip
In this paper, we work in the setting of the \emph{hyperintegers} of
nonstandard analysis, and we prove some ``embeddability properties" of
sets of differences.
(Clearly, general results on sumsets of natural numbers
directly follow from the corresponding general results on
difference sets, as a set $A$ and the set of its opposites
$-A$ have the same upper Banach density).
A set $A$ is ``embeddable" into $B$ if
every finite configuration of $A$ has shifted copies in $B$,
so that the finite combinatorial structure of $B$ is at least as
rich as that of $A$.
As corollaries to our main theorem, we obtain at once improvements
of results by I.Z.~Rusza about intersections of difference sets, and a
sharpening of Jin's theorem (as refined by V.~Bergelson,
H.~F\"urstenberg and and B.~Weiss).
We remark that many of the results proved here for sets of integers
can be generalized to amenable groups (see \cite{dl}).

\smallskip
The first section of this paper contains the basic notions and
notation, and the statements of the main results.
In the second section, characterizations of several
combinatorial notions in the nonstandard setting of hyperintegers
are presented, which will be used in the sequel.
Section 3 is focused on \emph{Delta sets} $A-A$ and, more generally,
on density-Delta sets. In the next fourth section, we isolate
notions of \emph{finite embeddability} for sets of integers,
and show their basic properties.
The main results of this paper about difference sets $A-B$,
along with several corollaries, are proved in the last Section 5.

\bigskip
\section{Preliminaries and statement of the main results}\label{sec-preliminaries}

If not specified otherwise, throughout the paper by ``set" we shall
always mean a set of \emph{integers}.
By the set $\N$ of natural numbers we mean the set
of \emph{positive integers},
so that $0\notin\N$.

\smallskip
Recall the following basic definitions (see \emph{e.g.} \cite{tv}).
The \emph{difference set} and the \emph{sumset} of $A$ and $B$
are respectively:
$$A-B\ =\ \{a-b\mid a\in A,\ b\in B\}\ ;\quad
A+B\ =\ \{a+b\mid a\in A,\ b\in B\}.$$

The set of differences $\Delta(A)=A-A$ when the two sets are
the same, is called the \emph{Delta set} of $A$.
Clearly, Delta sets are symmetric around $0$, \emph{i.e.}
$t\in\Delta(A)\Leftrightarrow -t\in\Delta(A)$.\footnote
{~Some authors include in $\Delta(A)$ only the positive numbers of $A-A$.}


\smallskip
A set is \emph{thick} if it includes arbitrarily
long (finite) intervals; it is \emph{syndetic} if it has bounded gaps,
\emph{i.e.} if its complement is not thick; it is
\emph{piecewise syndetic} if $A=B\cap C$ where $B$ is syndetic and $C$ is thick.
The following characterizations
directly follow from the definitions:
$A$ is syndetic if and only if
$A+F=\Z$ for a suitable finite set $F$;
$A$ is piecewise syndetic if and only if
$A+F$ is thick for a suitable finite set $F$.

\smallskip
The \emph{lower asymptotic density} $\underline{d}(A)$ and
the \emph{upper asymptotic density} $\overline{d}(A)$ of a set $A$
of natural numbers are defined by putting:
$$\underline{d}(A)\ =\ \liminf_{n\to\infty}\frac{|A\cap[1,n]|}{n}\quad\text{and}\quad
\overline{d}(A)\ =\ \limsup_{n\to\infty}\frac{|A\cap[1,n]|}{n}\,.$$

Another notion of density for sets of natural numbers that is widely used in
number theory is \emph{Schnirelmann density}:
$$\sigma(A)\ =\ \inf_{n\in\N}\frac{|A\cap[1,n]|}{n}.$$

The \emph{upper Banach density} $\text{BD}(A)$ (also known as \emph{uniform density})
generalizes the upper density by considering arbitrary
intervals in place of initial intervals:
$$\text{BD}(A)\ =\
\lim_{n\to\infty}\left(\max_{x\in\Z}\frac{|A\cap[x+1,x+n]|}{n}\right)\ =\
\inf_{n\in\N}\left\{\max_{x\in\Z}\frac{|A\cap[x+1,x+n]|}{n}\right\}.$$

We shall also consider
the \emph{lower Banach density}:
$$\underline{\text{BD}}(A)\ =\
\lim_{n\to\infty}\left(\min_{x\in\Z}\frac{|A\cap[x+1,x+n]|}{n}\right)\ =\
\sup_{n\in\N}\left\{\min_{x\in\Z}\frac{|A\cap[x+1,x+n]|}{n}\right\}.$$
(See \emph{e.g.} \cite{gtt} for details about equivalent definitions of Banach density.)
All the above densities are \emph{shift invariant}, that is
a set $A$ has the same density of any shift $A+t$ .
It is readily verified that $\sigma(A)\le\underline{d}(A)$ and that
$$\underline{\text{BD}}\,(A)\ \le\ \underline{d}(A)\ \le\
\overline{d}(A)\ \le\ \text{BD}(A).\footnote
{~Actually, for any choice of real numbers $0\le r_1\le r_2\le r_3\le r_4\le 1$,
it is not hard to find sets $A$ such that
$\underline{\text{BD}}(A)=r_1$, $\underline{d}(A)=r_2$,
$\overline{d}(A)=r_3$ and $\text{BD}(A)=r_4$.}$$

Notice that $\underline{d}(A^c)=1-\overline{d}(A)$ and
$\underline{\text{BD}}(A^c)=1-\text{BD}(A)$.
Remark also that a set $A$ is thick if and
only if $\text{BD}(A)=1$, and hence, a set $A$ is syndetic
if and only if $\underline{\text{BD}}(A)>0$.
The following is a well-known intersection property of Delta sets.

\medskip
\begin{proposition}\label{pigeonhole}
Assume that $\text{BD}(A)>0$. Then $\Delta(A)\cap\Delta(B)\ne\emptyset$
for \emph{any} infinite set $B$.
In consequence, $\Delta(A)$ is syndetic.
\end{proposition}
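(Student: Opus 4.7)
The plan is to prove the (nontrivial) intersection assertion -- that $\Delta(A)\cap\Delta(B)$ contains a nonzero element -- by a double-counting / pigeonhole argument on translates of $A$, and then to derive syndeticity of $\Delta(A)$ by contrapositive, building an infinite $B$ with $\Delta(A)\cap\Delta(B)=\{0\}$ whenever $\Delta(A)$ has unbounded gaps.

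For the first part, set $\alpha=\text{BD}(A)>0$ and fix $m\in\N$ with $m\alpha>1$. Pick any $m$ distinct elements $b_1<b_2<\cdots<b_m$ of $B$ and put $D=b_m-b_1$. Fix $\varepsilon>0$ with $m(\alpha-\varepsilon)>1$ and, using the definition of $\text{BD}(A)$, select $N\gg D$ and an interval $I=[y+1,y+N]$ with $|A\cap I|\geq(\alpha-\varepsilon)N$. A direct comparison of $I$ with its translate $I+t$ gives $\bigl||A\cap I|-|A\cap(I+t)|\bigr|\leq|t|$, hence
\[
|(A-b_i)\cap I|\ =\ |A\cap(I+b_i)|\ \geq\ (\alpha-\varepsilon)N-D \qquad(i=1,\dots,m).
\]
Summing yields $\sum_{i=1}^{m}|(A-b_i)\cap I|\geq m(\alpha-\varepsilon)N-mD$, which exceeds $|I|=N$ once $N$ is large (since $m(\alpha-\varepsilon)>1$). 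Pigeonhole then gives $z\in I$ lying in two distinct shifts $A-b_i$ and $A-b_j$, and the two elements $z+b_i,z+b_j\in A$ differ by the nonzero amount $b_i-b_j\in\Delta(B)$; this is the sought common nonzero element of $\Delta(A)\cap\Delta(B)$.

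For the syndeticity consequence I argue by contradiction. If $\Delta(A)$ is not syndetic then $\Delta(A)+F\neq\Z$ for every finite $F$; moreover, $\Z\setminus(\Delta(A)+F)$ is unbounded above. Indeed, the symmetry $\Delta(A)=-\Delta(A)$ turns any hypothetical inclusion $[M,\infty)\subseteq\Delta(A)+F$ into $(-\infty,-M]\subseteq\Delta(A)-F$, and adjoining finitely many singleton shifts to cover $[-M+1,M-1]$ would then force $\Delta(A)$ to be syndetic. Given this, I build $B=\{b_1<b_2<\cdots\}$ greedily, setting $b_1=0$ and at each stage choosing $b_{k+1}>b_k$ outside $\Delta(A)+\{b_1,\dots,b_k\}$. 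Then $b_j-b_i\notin\Delta(A)$ for all $i<j$, so $\Delta(A)\cap\Delta(B)=\{0\}$, contradicting the first part.

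The main obstacle is the synchronization issue in the first step: Banach density only guarantees windows of density close to $\alpha$ one set at a time, whereas pigeonhole demands a single window that simultaneously accommodates all $m$ translates $A-b_i$. This is handled by forcing the window length $N$ to dominate the diameter $D=b_m-b_1$, so that the $O(D)$ boundary loss incurred by the translations is absorbed into the $\varepsilon N$ slack. The remaining ingredients -- pigeonhole for the counting, and a greedy construction exploiting symmetry for the syndeticity consequence -- are routine.
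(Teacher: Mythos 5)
Your proposal is correct and follows essentially the same two-step plan as the paper: pigeonhole on translates of $A$ to extract a nonzero element of $\Delta(A)\cap\Delta(B)$, then a contrapositive construction of an infinite $B$ avoiding $\Delta(A)$ whenever $\Delta(A)$ fails to be syndetic. Where you add value is in the pigeonhole step: the paper asserts that pairwise disjoint translates would satisfy $\text{BD}\bigl(\bigcup_i(A+b_i)\bigr)=\sum_i\text{BD}(A+b_i)$, which, read as a fact about arbitrary pairwise disjoint sets, is false (upper Banach density is subadditive but not additive). The identity is salvaged only because the $n$ sets are all translates of the single set $A$, so one long window where $A$ has density close to $\alpha$ serves simultaneously for all the shifts up to a boundary loss of order $D=\max_{i,j}|b_i-b_j|$. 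Your choice of $N\gg D$, absorbing that $O(D)$ loss into the $\varepsilon N$ slack, is exactly the synchronization argument the paper's terse proof leaves implicit. For the syndeticity consequence both proofs build the witness $B$ by exploiting the symmetry of $\Delta(A)$; the paper isolates a thick set $T\subseteq\Delta(A)^c\cap\N$ and picks $b_1<b_2<\cdots$ with $b_j-b_i\in T$, while you greedily pick $b_{k+1}\notin\Delta(A)+\{b_1,\ldots,b_k\}$ after noting that this complement is unbounded above. The two constructions are interchangeable, and your remark that the conclusion really concerns a \emph{nonzero} common difference (since $0$ is trivially in any Delta set) is a useful clarification.
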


\begin{proof}
It essentially consists in a direct application
of the \emph{pigeonhole principle} argument.
Precisely, one considers the family of shifts $\{A+b_i\mid i=1,\ldots,n\}$
by distinct elements $b_i\in B$. As each $A+b_i$ has the same upper Banach
density as $A$, if $n$ is sufficiently large then those shifts cannot
be pairwise disjoint, as otherwise
$\text{BD}(\bigcup_{i=1}^n A+b_i)=\sum_{i=1}^n\text{BD}(A+b_i)=
n\cdot\text{BD}(A)>1$, a contradiction. But then
$(A+b_i)\cap(A+b_j)\ne\emptyset$ for suitable $i\ne j$, and hence
$\Delta(A)\cap\Delta(B)\ne\emptyset$, as desired.
Now assume by contradiction that the complement
of $\Delta(A)$ is thick. By symmetry, its positive
part $T=\Delta(A)^c\cap\N$ is thick as well.
For any thick set $T\subseteq\N$, it is not hard
to construct an increasing sequence $B=\{b_1<b_2<\ldots\}$
such that $b_j-b_i\in T$ for all $j>i$.
But then $\Delta(B)\subseteq -T\cup\{0\}\cup T=\Delta(A)^c$,
\emph{i.e.} $\Delta(B)\cap\Delta(A)=\emptyset$, a contradiction.
\end{proof}

\smallskip
The above property is just a hint of the rich combinatorial structure of
sets of differences, whose investigation seems still far
to be completed (see \emph{e.g.} the recent papers \cite{rs,br,lm}).

\smallskip
Suitable generalizations of Delta sets are the following.

\medskip
\begin{definition}\label{def-finiteembeddability}
Let $A\subseteq\Z$ be a set of integers. For $\epsilon\ge 0$,
the following are called the \emph{$\epsilon$-density-Delta sets}
(or more simply \emph{$\epsilon$-Delta sets}) of $A$:

\smallskip
\begin{itemize}
\item
$\overline{\Delta}_\epsilon(A)\ =\ \{t\in\Z\mid\overline{d}(A\cap(A-t))>\epsilon\}$.

\smallskip
\item
$\Delta_\epsilon(A)\ =\ \{t\in\Z\mid\text{BD}(A\cap(A-t))>\epsilon\}$.
\end{itemize}
\end{definition}

\medskip
Similarly to Delta sets, also $\epsilon$-Delta sets are symmetric around $0$.
Moreover, it is readily seen that
$\overline{\Delta}_\epsilon(A)\subseteq\Delta_\epsilon(A)\subseteq\Delta(A)$
for all $\epsilon\ge 0$. Remark that if $t\in\Delta_\epsilon(C)$
(or if $t\in\overline{\Delta}_\epsilon(C)$), then $t$ is indeed the common
difference of ``many" pairs of elements of $A$, in the sense that
the set $\{x\in\Z\mid x, x+t\in A\}$ has upper Banach density
(or upper asymptotic density, respectively) greater than $\epsilon$.

\smallskip
We shall find it convenient to isolate the following notions
of embeddability for sets of integers $X,Y$.

\medskip
\begin{definition}\label{def-epsilondeltasets}
Let $X,Y\subseteq\Z$ be sets of integers.

\smallskip
\begin{itemize}
\item
$X$ is \emph{(finitely) embeddable} in $Y$, denoted $X\lhd Y$,
if every finite configuration $F\subseteq X$
has a shifted copy $t+F\subseteq Y$.

\smallskip
\item
$X$ is \emph{densely embeddable} in $Y$,
denoted $X\lhd_d Y$, if every finite configuration $F\subseteq X$
has ``densely-many'' shifted copies included in $Y$,
\emph{i.e.} if the intersection
$\bigcap_{x\in F}(Y-x)=\{t\in\Z\mid t+F\subseteq Y\}$
has positive upper Banach density.
\end{itemize}
\end{definition}

\medskip
Trivially $X\lhd_d Y \Rightarrow X\lhd Y$, and it is easily
seen that the converse implication does not hold.
Finite embeddability preserves several of the fundamental
combinatorial notions that are commonly considered in combinatorics
of integer numbers (see Section \ref{sec-embeddability}).\footnote
{~The notions of embeddability isolated above seem to be of interest for their own sake.
\emph{E.g.}, one can extend finite embeddability to \emph{ultrafilters} on $\N$,
by putting $\U\lhd \V$ when for every $B\in\V$ there exists $A\in\U$
with $A\lhd B$. The resulting relation in the space of ultrafilters $\beta\N$
satisfies several nice properties, which are investigated in \cite{bd}.}

\smallskip
The main results obtained in this paper are contained in
the following three theorems. The first one is about the
syndeticity property of $\epsilon$-Delta sets.

\bigskip
\noindent
\textbf{Theorem I.}
\emph{Let $\text{BD}(A)=\alpha>0$ (or $\overline{d}(A)=\alpha>0$), and
let $0\le\epsilon<\alpha^2$. Then for every infinite $X\subseteq\Z$
and for every $x\in X$ there exists a finite subset $F\subset X$ such that:}

\begin{enumerate}
\item
$x\in F$;

\smallskip
\item
$|F|\le\lfloor\frac{\alpha-\epsilon}{\alpha^2-\epsilon}\rfloor=k$;

\smallskip
\item
$X\subseteq\Delta_\epsilon(A)+F$
(or $X\subseteq\overline{\Delta}_\epsilon(A)+F$, respectively).
\end{enumerate}
\emph{In consequence, the set $\Delta_\epsilon(A)$
(or $\overline{\Delta}_\epsilon(A)$, respectively) is syndetic,
and its lower Banach density is not smaller than $1/k$.}

\bigskip
The second theorem is a general property that
holds for all sets of positive upper Banach density.

\bigskip
\noindent
\textbf{Theorem II.}
\emph{Let $\text{BD}(A)=\alpha>0$. Then there exists
a set of natural numbers $E\subseteq\N$ such that:}

\begin{enumerate}
\item
$\sigma(E)\ge\alpha$;

\smallskip
\item
\emph{$E\lhd_d A$, and hence $\Delta(E)\subseteq\Delta_0(A)$
and $\Delta_\epsilon(E)\subseteq\Delta_\epsilon(A)$ for all $\epsilon\ge 0$.}
\end{enumerate}

\medskip
The main result in this paper concerns an embeddability property of
difference sets.

\bigskip
\noindent
\textbf{Theorem III.}
\emph{Let $\text{BD}(A)=\alpha>0$ and $\text{BD}(B)=\beta>0$.
Then there exists a set of natural numbers $E\subseteq\N$ such that:}

\smallskip
\begin{enumerate}
\item
\emph{The Schnirelmann density $\sigma(E)\ge\alpha\beta$;}

\smallskip
\item
\emph{For every finite $F\subset E$ there exists $\epsilon>0$
such that for arbitrarily large intervals $J$
one finds a suitable shift $A_J=A-t_J$ with the property that}
$$\frac{|\left(\bigcap_{e\in F}(A_J\cap B)-e\right)\cap J|}{|J|}\ \ge\ \epsilon$$

\smallskip
\item
\emph{Both $E\lhd_d A$ and $E\lhd_d B$, and hence:}

\smallskip
\begin{itemize}
\item
\emph{$\Delta(E)\subseteq\Delta_0(A)\cap\Delta_0(B)$;}

\smallskip
\item
\emph{$\Delta_\epsilon(E)\subseteq\Delta_\epsilon(A)\cap\Delta_\epsilon(B)$ for all $\epsilon\ge 0$;}

\smallskip
\item
\emph{$\Delta(E)\lhd_d A-B$.}
\end{itemize}
\end{enumerate}

\bigskip
Several corollaries can be derived from the above theorems.
The first one is a sharpening of a result about intersections of
Delta sets by I.Z.~Ruzsa \cite{ru2}, which improved
on a previous theorem by C.L.~Stewart and R.~Tijdeman \cite{st1}.

\bigskip
\noindent
\textbf{Corollary.}
\emph{Assume that $A_1,\ldots,A_n\subseteq\Z$ have
positive upper Banach densities $\text{BD}(A_i)=\alpha_i$.
Then there exists a set $E\subseteq\N$ with
$\sigma(E)\ge\prod_{i=1}^n\alpha_i$ and such that
$\Delta_\epsilon(E)\subseteq\bigcap_{i=1}^n\Delta_\epsilon(A_i)$
for every $\epsilon\ge 0$.}

\bigskip
A second corollary is about the syndeticity of
intersections of density-Delta sets.

\bigskip
\noindent
\textbf{Corollary.}
\emph{Assume that $\text{BD}(A)=\alpha>0$ and $\text{BD}(B)=\beta>0$. Then
for every $0\le\epsilon<\alpha^2\beta^2$,
for every infinite $X\subseteq\Z$, and for every $x\in X$,
there exists a finite subset $F\subset X$ such that}

\begin{enumerate}
\item
$x\in F$\,;

\smallskip
\item
$|F|\le\lfloor\frac{\alpha\beta-\epsilon}{\alpha^2\beta^2-\epsilon}\rfloor=k$\,;

\smallskip
\item
$X\subseteq (\Delta_\epsilon(A)\cap\Delta_\epsilon(B))+F$.
\end{enumerate}
\emph{In consequence, the set $\Delta_\epsilon(A)\cap\Delta_\epsilon(B)$
is syndetic, and its lower Banach density is not smaller than $1/k$.}

\bigskip
A similar result is also obtained about the syndeticity of
difference sets.

\bigskip
\noindent
\textbf{Corollary.}
\emph{Assume that $\text{BD}(A)=\alpha>0$ and $\text{BD}(B)=\beta>0$.
Then for every infinite $X\subseteq\Z$ and
for every $x\in X$, there exists a finite subset $F\subset X$ such that}

\begin{enumerate}
\item
$x\in F$\,;

\smallskip
\item
$|F|\le\lfloor\frac{1}{\alpha\beta}\rfloor$\,;

\item
$X\lhd_d (A-B)+F$.
\end{enumerate}

\bigskip
If we let $X=\Z$, we obtain a refinement of Jin's theorem \cite{jin1}
where a precise bound on the number of shifts of $A-B$ which are
needed to cover a thick set is given.

\bigskip
\noindent
\textbf{Corollary.}
\emph{Assume that $\text{BD}(A)=\alpha>0$ and $\text{BD}(B)=\beta>0$.
Then there exists a finite set $F$ such that
$|F|\le\lfloor 1/{\alpha\beta}\rfloor$
and $A-B+F$ is thick.}

\bigskip
Finally, by the embedding $\Delta(E)\lhd_d A-B$
where $E$ has a positive Schnirelmann density,
we can also recover the \emph{Bohr property} of difference sets proved
by V.~Bergelson, H.~F\"urstenberg and B.~Weiss in \cite{bfw}.

\bigskip
\noindent
\textbf{Corollary.}
\emph{Let $A$ and $B$ have positive upper Banach
density. Then the difference set $A-B$ is piecewise Bohr.}

\bigskip
\section{Nonstandard characterizations of combinatorial properties}

In the proofs of this paper we shall use the basics of \emph{nonstandard analysis},
including the \emph{transfer principle} and the notion of \emph{internal set}
and of \emph{hyperfinite set}.
In particular, the reader is assumed to be familiar with the
fundamental properties of the \emph{hyperintegers} $\hZ$ and of
the \emph{hyperreals} $\hR$.
The hyperintegers are special \emph{elementary extensions} of the
integers, namely \emph{complete extensions} (See \S 3.1 and 6.4 of
\cite{ck} for the definitions).
Informally, one could say that the hyperintegers are a
sort of ``weakly isomorphic extensions"
of the integers, in the sense that they share the same
``elementary" (\emph{i.e.} first-order) properties of $\Z$; in particular, $\hZ$ is a discretely
ordered ring whose positive part is the set $\hN$ of \emph{hypernatural} numbers.
Recall that the natural numbers $\N$ are an initial segment of $\hN$.
Similarly, the hyperreal numbers $\hR\supset\R$ have
the same first-order properties as the reals,
and so they are an ordered field.
As a proper extension of the real line,
$\hR$ is necessarily non-Archimedean, and hence
it contains \emph{infinitesimal} and \emph{infinite} numbers.
Recall that a number $\xi\in\hR$ is infinitesimal if $-1/n<\xi<1/n$ for all
$n\in\N$; $\xi$ is infinite if its reciprocal $1/\xi$ is
infinitesimal, \emph{i.e.} if $|\xi|>n$  for all $n\in\N$;
$\xi$ is finite if it is not infinite, \emph{i.e.} if $-n<\xi<n$ for some $n\in\N$.
In one occasion (proof of Proposition \ref{fe}), we shall apply
the \emph{overspill principle}, namely the property that if an internal set
contains arbitrarily large (finite) natural numbers, then it necessarily contains
also an \emph{infinite} hypernatural number.

\smallskip
A semi-formal introduction to the basic ideas of nonstandard analysis
can be found in the first part of the survey \cite{bdf};
as for the general theory, several books can be used as references,
including the classical monographies \cite{sl,kei},
or the more recent textbook \cite{gol}; finally, we refer the reader
to \S 4.4 of \cite{ck} for the logical foundations.

\smallskip
Let us now fix our notation.
If $\xi,\zeta\in\hR$ are hyperreal numbers, we write
$\xi\approx\zeta$ when $\xi$ and $\zeta$ are \emph{infinitely close},
\emph{i.e.} when their distance $|\xi-\zeta|$ is infinitesimal.
If $\xi\in\hR$ is finite,
then its \emph{standard part} $\text{st}(\xi)=\inf\{r\in\R\mid r>\xi\}$
is the unique real number which is infinitely close to $\xi$.
For $x\in\R$, $\lfloor x\rfloor=\max\{k\in\Z\mid k\le x\}$
is the \emph{integer part} of $x$; and the same notion transfers to
the \emph{hyperinteger part}
of an hyperreal number $\lfloor \xi\rfloor=\max\{\nu\in\hZ\mid \nu\le\xi\}$.
Also the notion of sumset $C+D$ and difference set $C-D$ transfers to
pairs of internal sets $C,D\subseteq\hZ$.
If $C$ is a hyperfinite set, we shall abuse notation and
denote by $|C|\in\hN$ its \emph{internal cardinality}.
An infinite interval of hyperintegers is an
interval $I=[\Omega+1,\Omega+N]\subset\hZ$ whose length $N$ is an
infinite hypernatural number. Clearly, the internal cardinality $|I|=N$.

\smallskip
We shall use the following nonstandard characterizations
(see \emph{e.g.} \cite{jin1,jin2}).

\smallskip
\begin{itemize}
\item
$A$ is \emph{thick} $\Leftrightarrow I\subseteq\hA$
for some infinite interval $I$ of hyperintegers.

\smallskip
\item
$A$ is \emph{syndetic} $\Leftrightarrow$
$\hA$ has only finite gaps, \emph{i.e.}\,\,the distance
of consecutive elements of $\hA$ is always a (finite)
natural number.

\smallskip
\item
$A$ is \emph{piecewise syndetic} $\Leftrightarrow$
there is an infinite interval $I$ of hyperintegers
where $\hA$ has only finite gaps.

\smallskip
\item
$\underline{d}(A)\le\alpha$ (or $\overline{d}(A)\ge\alpha$)
$\Leftrightarrow$ there is an infinite hypernatural number $N$
such that $\text{st}({|\hA\cap[1,N]|}/{N})\le\alpha$
(or $\text{st}({|\hA\cap[1,N]|}/{N})\ge\alpha$, respectively).

\smallskip
\item
$d(A)=\alpha$ $\Leftrightarrow$ ${|\hA\cap[1,N]|}/{N}\approx\alpha$
for all infinite $N$.

\smallskip
\item
$\text{BD}(A)\ge\alpha$ $\Leftrightarrow$
there exists an infinite interval of hyperintegers $I\subset\hZ$
such that $\text{st}({|\hA\cap I|}/{|I|})\ge\alpha$
$\Leftrightarrow$ for every infinite $N\in\hN$
there exists an interval $I\subset\hZ$ of
length $N$ such that
$\text{st}({|\hA\cap I|}/{|I|})\ge\alpha$.

\smallskip
\item
$\underline{\text{BD}}(A)\ge\alpha$ $\Leftrightarrow$
$\text{st}({|\hA\cap I|}/{|I|})\ge\alpha$
for every infinite interval of hyperintegers $I\subset\hZ$.
\end{itemize}

\medskip
As a warm-up for the use of the above nonstandard
characterizations, let us prove a property
which will be used in the sequel.

\medskip
\begin{proposition}\label{lowerbanach}
Let $A$ be a set of integers and let
$F$ be a finite set with $|F|=k$.

\begin{enumerate}
\item
If $A+F=\Z$ then $\underline{\text{BD}}\,(A)\ge 1/k$.

\smallskip
\item
If $A+F$ is thick then $\text{BD}(A)\ge 1/k$.
\end{enumerate}
\end{proposition}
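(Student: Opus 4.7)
The plan is to translate both claims into statements about the hyperintegers via the nonstandard characterizations just listed, and then run a simple pigeonhole argument on a hyperfinite interval, absorbing any boundary defects produced by translating by elements of $F$ into infinitesimal errors.

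For part (1), I would transfer the hypothesis $A+F=\Z$ to obtain $\hA+F=\hZ$ (note $F$ is standard and finite, so $F$ itself is unchanged under $*$). Fix an arbitrary infinite interval $I=[\Omega+1,\Omega+N]\subset\hZ$; to conclude $\underline{\text{BD}}(A)\ge 1/k$ it suffices, by the characterization recalled in the preceding bullet list, to show $\text{st}(|\hA\cap I|/N)\ge 1/k$. Since $I\subseteq\hA+F=\bigcup_{f\in F}(\hA+f)$, I get
\[
N \;=\; |I| \;\le\; \sum_{f\in F}|I\cap(\hA+f)| \;=\; \sum_{f\in F}|\hA\cap(I-f)|.
\]
Now $I-f$ is an interval of the same length $N$, and its symmetric difference with $I$ has size at most $2|f|$, so $|\hA\cap(I-f)|\le|\hA\cap I|+2|f|$. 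Summing and dividing by $N$ gives $1\le k\cdot|\hA\cap I|/N + (2\sum_{f\in F}|f|)/N$, and the second term is infinitesimal because $F$ is standard finite while $N$ is infinite. Taking standard parts yields the bound.

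For part (2), the nonstandard characterization of thickness gives an infinite interval $I\subseteq\widehat{A+F}=\hA+F$. The same covering argument $I\subseteq\bigcup_{f\in F}(\hA+f)$ combined with the ordinary (standard, finite) pigeonhole principle on the $k$ shifts produces some $f\in F$ with $|I\cap(\hA+f)|\ge|I|/k$, equivalently $|\hA\cap(I-f)|\ge|I-f|/k$. Since $I-f$ is again an infinite interval, the corresponding characterization of upper Banach density yields $\text{BD}(A)\ge 1/k$.

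The only subtlety worth attention is the treatment of the translates $I-f$: one must observe that a finite shift of an infinite interval is still an infinite interval of the same internal cardinality, and that the boundary defect $2|f|$ is negligible against the infinite length $N$. Neither step is really an obstacle, and the rest is bookkeeping with the characterizations of the two Banach densities.
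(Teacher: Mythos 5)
Your proof is correct and follows essentially the same route as the paper: transfer the covering hypothesis, cover a hyperfinite interval $I$ by the $k$ shifts $\hA+f$, count, and absorb the boundary discrepancy (from translating $I$ by a standard finite $f$) into an infinitesimal error. The only cosmetic difference is in part (1), where you sum over all $f\in F$ and divide by $N$ rather than first invoking the pigeonhole principle to locate a single good $\overline{x}\in F$ and then using approximate shift-invariance of $|\hA\cap I|/N$; the two bookkeeping schemes are interchangeable, and your version makes the negligibility of the boundary term slightly more explicit.
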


\begin{proof}
$(1)$. For every interval $I$
of infinite length $N$, we have that:
$$I\ =\ \hZ\cap I\ =\ \ {}^*\!\left(\bigcup_{x\in F}
(A+x)\right)\cap I\ =\
\bigcup_{x\in F}\left((\hA+x)\cap I\right).$$
By the \emph{pigeonhole principle}, there exists
$\overline{x}\in F$ such that $|(\hA+\overline{x})\cap I|\ge |I|/k$,
and hence $\text{st}\left(|\hA\cap I|/|I|\right)=
\text{st}\left(|(\hA+\overline{x})\cap I|/|I|\right)\ge 1/k$.
By the nonstandard characterization of lower Banach density,
this yields the thesis $\underline{\text{BD}}(A)\ge 1/k$.

\smallskip
$(2)$. By the nonstandard characterization of thickness,
there exists an infinite interval $I$ with
$I\subseteq{}^*(A+F)=\bigcup_{x\in F}(\hA+x)$.
Exactly as above, we can pick an element $\overline{x}\in F$ such that
$|(\hA+\overline{x})\cap I|\ge |I|/k$, and hence
$\text{st}\left(|\hA\cap I|/|I|\right)\ge 1/k$.
By the nonstandard characterization of Banach density,
we conclude that $\text{BD}(A)\ge 1/k$.
\end{proof}

\bigskip
\section{Density-Delta sets}\label{sec-densitydelta}

In Section \ref{sec-preliminaries}, we recalled the
well-known property
that all intersections of Delta sets
$\Delta(A)\cap\Delta(B)$ are non-empty,
whenever $A$ has positive upper Banach density and
$B$ is infinite (see Proposition \ref{pigeonhole}).
By the same \emph{pigeonhole principle} argument used
in the proof of that proposition, one also shows that:

\smallskip
\begin{itemize}
\item
If $\overline{d}(A)>0$ then $\overline{\Delta}_0(A)$ is syndetic.

\smallskip
\item
If $\text{BD}(A)>0$ then $\Delta_0(A)$ is syndetic.
\end{itemize}

\smallskip
This section aims at sharpening the above results
by considering $\epsilon$-Delta sets (see Definition \ref{def-epsilondeltasets}).
To this end, we shall use the following
combinatorial lemma, which is proved by a straight
application of \emph{Cauchy-Schwartz inequality}.
The main point here is that this result holds in the
nonstandard setting of \emph{hyperintegers}.

\medskip
\begin{lemma}\label{lemmazero}
Let $N\in\hN$ be an infinite hypernatural number,
let $\{C_i\mid i\in\Lambda\}$ be a family of internal
subsets of $[1,N]$, and assume that every
standard part $\text{st}({|C_i|}/{N})\ge\gamma$,
where $\gamma$ is a fixed positive real number.
Then for every $0\le\epsilon<\gamma^2$ and
for every $F\subseteq\Lambda$
with $|F|>\frac{\gamma-\epsilon}{\gamma^2-\epsilon}$,
there exist distinct elements $i,j\in F$ such that
$\text{st}({|C_i\cap C_j|}/{N})>\epsilon$.
\end{lemma}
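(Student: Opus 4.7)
The plan is to apply the Cauchy--Schwarz inequality in $\hR$ to the internal multiplicity function $f\colon[1,N]\to\hN$ defined by $f(x)=\sum_{i\in F}\chi_{C_i}(x)$. Write $n=|F|$ and $S=\sum_{i\in F}|C_i|=\sum_{x=1}^{N}f(x)$. Expanding the second moment,
$$\sum_{x=1}^{N}f(x)^{2}\ =\ \sum_{i,j\in F}|C_i\cap C_j|\ =\ S+T,\qquad\text{where }T:=\sum_{\substack{i,j\in F\\ i\ne j}}|C_i\cap C_j|.$$
Cauchy--Schwarz yields $S^{2}\le N\sum_{x}f(x)^{2}=N(S+T)$, equivalently $T\ge S(S-N)/N$. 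This single inequality is the whole engine of the proof; everything else is just bookkeeping of standard parts.

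Next I would estimate $S$ from below. Since $F$ is finite and $\text{st}(|C_i|/N)\ge\gamma$ for each $i\in F$, there is a single infinitesimal $\eta$ with $S/N\ge n\gamma-\eta$. The hypothesis $n>(\gamma-\epsilon)/(\gamma^{2}-\epsilon)\ge 1/\gamma$ (the last inequality holding because $\gamma\le 1$) guarantees that $S/N-1$ is positive and not infinitesimal, so
$$\frac{T}{N}\ \ge\ \frac{S}{N}\left(\frac{S}{N}-1\right)\ \ge\ (n\gamma-\eta)(n\gamma-1-\eta),$$
and taking standard parts yields $\text{st}(T/N)\ge n^{2}\gamma^{2}-n\gamma$.

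I would then argue by contradiction. Suppose $\text{st}(|C_i\cap C_j|/N)\le\epsilon$ for every distinct pair $i,j\in F$. Summing the $n(n-1)$ ordered off-diagonal terms, $T/N\le n(n-1)\epsilon+\mu$ for some infinitesimal $\mu$, whence $\text{st}(T/N)\le n(n-1)\epsilon$. Combining with the previous bound gives $n(n-1)\epsilon\ge n^{2}\gamma^{2}-n\gamma$, which after cancelling $n$ and rearranging reads
$$n(\gamma^{2}-\epsilon)\ \le\ \gamma-\epsilon,$$
contradicting the hypothesis on $|F|$ (note $\gamma^{2}-\epsilon>0$).

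The only real obstacle is notational: the hypothesis involves standard parts rather than genuine equalities of reals, so one must track the finitely many infinitesimal slacks without letting them aggregate into a standard error. Because $F$ is finite and the hypothesis $n>(\gamma-\epsilon)/(\gamma^{2}-\epsilon)$ is strict, those slacks cause no harm and the conclusion $\text{st}(|C_i\cap C_j|/N)>\epsilon$ comes out strictly, as required.
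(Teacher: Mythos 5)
Your proof is correct and follows essentially the same route as the paper's: the key is the Cauchy--Schwarz bound $\left(\sum_{x}f(x)\right)^2\le N\sum_x f(x)^2$ applied to the internal multiplicity function $f=\sum_{i\in F}\chi_{C_i}$, expanding the second moment into diagonal plus off-diagonal intersection counts, and then tracking infinitesimal errors to obtain $n(\gamma^2-\epsilon)\le\gamma-\epsilon$ by contradiction. The only cosmetic differences are that the paper normalizes WLOG to $\text{st}(|C_i|/N)=\gamma$ and writes the computation with unordered pairs $i<j$; both arrive at the same terminal inequality.
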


\begin{proof}
Assume by contradiction that there exists a finite subset $F\subseteq I$
with cardinality $k=|F|>\frac{\gamma-\epsilon}{\gamma^2-\epsilon}$
and such that
$\text{st}({|C_i\cap C_j|}/{N})\le\epsilon$ for all distinct $i,j\in F$.
To simplify matters, let us assume without loss of generality
that all standard parts $\text{st}(|C_i|/N)=\gamma$.
By our hypotheses, we have the following:

\medskip
\begin{itemize}
\item
$c_i=|C_i|/N=\gamma+\eta_i$ where $\eta_i\approx 0$.

\medskip
\item
$\sum_{i\in F}c_i=k\,\gamma+\eta$ where $\eta=\sum_{i\in F}\eta_i\approx 0$.

\medskip
\item
$c_{ij}={|C_i\cap C_j|}/{N}\le\epsilon+\upsilon_{ij}$ where
$\upsilon_{ij}\approx 0$.

\medskip
\item
$\sum_{i\ne j} c_{ij}\le\binom{k}{2}\cdot\epsilon+\upsilon$
where $\upsilon=\sum_{i\ne j}\upsilon_{ij}\approx 0$.
\end{itemize}

\medskip
Now let us denote by $\chi_i:[1,N]\to\{0,1\}$ the
characteristic function of $C_i$. Clearly $c_i=(1/N)\cdot\sum_{\xi=1}^N\chi_i(\xi)$
and $c_{ij}=(1/N)\cdot\sum_{\xi=1}^N\chi_i(\xi)\,\chi_j(\xi)$.
By the \emph{Cauchy-Schwartz inequality}, we obtain:
\begin{eqnarray}
\nonumber
k^2\gamma^2 & \approx & (k\,\gamma + \eta)^2\ =\ \left(\sum_{i\in F} c_i\right)^2\ =\
\frac{1}{N^2}\cdot\left(\,\sum_{i\in F}\left(\sum_{\xi=1}^N \chi_i(\xi)\right)\right)^2
\\
\nonumber
{} & \hspace{-2.5cm} = &
\hspace{-1.3cm}
\frac{1}{N^2}\cdot\left(\,\sum_{\xi=1}^N\ 1\cdot\left(\sum_{i\in F}\chi_i(\xi)\right)\right)^2\ \le\
\frac{1}{N^2}\cdot\left(\sum_{\xi=1}^N\ 1^2\right)\,\cdot\,\sum_{\xi=1}^N\left(\sum_{i\in F}\chi_i(\xi)\right)^2
\\
\nonumber
{} & \hspace{-2.5cm} = &
\hspace{-1.3cm}
\frac{1}{N}\cdot\sum_{\xi=1}^N\left(\sum_{i,j\in F}\chi_i(\xi)\cdot\chi_j(\xi)\right)\ =\
\sum_{i,j\in F}\left(\frac{1}{N}\cdot\sum_{\xi=1}^N\chi_i(\xi)\cdot\chi_j(\xi)\right)
\\
\nonumber
{} & \hspace{-2.5cm} = &
\hspace{-1.3cm}
\sum_{i\in F} c_i\,+\,2\cdot\sum_{i<j}c_{ij}\ \le\
k\cdot\gamma+\eta\,+\, 2\,\binom{k}{2}\,\epsilon\,+\,2\,\upsilon
\\
\nonumber
{} & \hspace{-2.5cm} \approx &
\hspace{-1.3cm}
k\,\gamma\,+\, k\,(k-1)\,\epsilon\ =\ k\,(\gamma+(k-1)\,\epsilon)\,,
\end{eqnarray}
and hence $k\,\gamma^2\le\gamma+(k-1)\,\epsilon$.
This contradicts the assumption $k>\frac{\gamma-\epsilon}{\gamma^2-\epsilon}$.
\end{proof}

\medskip
A consequence of the above lemma that is relevant to our purposes,
is the following one.

\medskip
\begin{lemma}\label{lemmaone}
Let $N\in\hN$ be an infinite hypernatural number, let $C\subseteq[1,N]$ be an internal set
with $\text{st}({|C|}/{N})=\gamma>0$, let $0\le\epsilon<\gamma^2$ be a real number,
and let $k=\lfloor\frac{\gamma-\epsilon}{\gamma^2-\epsilon}\rfloor$. Then
for every infinite set $X\subseteq\Z$ and for every $x\in X$,
there exists a finite subset $F\subset X$ with $x\in F$,
$|F|\le k$, and such that $X\subseteq\D_\epsilon(C)+F$, where
$$\D_\epsilon(C)\ =\
\left\{t\in\Z\,\Big|\,\text{st}\left(\frac{|C\cap(C-t)|}{N}\right)>\epsilon\right\}.$$
\end{lemma}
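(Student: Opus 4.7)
The plan is to apply Lemma \ref{lemmazero} to a family of shifted copies of $C$ indexed by $X$, combined with a greedy covering argument.

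For each $x \in X$, set $C_x := (C-x) \cap [1,N]$. The bijection $z \mapsto z+x$ identifies $C_x$ with $C \cap [1+x, N+x]$, so $|C_x|$ and $|C|$ differ by at most $|x|$; since $x$ is standard and $N$ is infinite, $\text{st}(|C_x|/N) = \gamma$. For any $x, y \in X$, the same substitution $w = z+x$ sets up a bijection between $C_x \cap C_y$ and $C \cap (C-(y-x)) \cap [1+x, N+x]$, and the latter agrees with $C \cap (C-(y-x))$ up to at most $|x|$ boundary elements. Consequently, for distinct $x, y \in X$,
$$\text{st}\!\left(\frac{|C_x \cap C_y|}{N}\right) > \epsilon \iff y - x \in \D_\epsilon(C).$$

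Next, run the following greedy construction. Put $F_0 := \{x\}$; having built $F_m$, if $X \subseteq \D_\epsilon(C) + F_m$ halt, otherwise pick any $x_{m+1} \in X \setminus (\D_\epsilon(C) + F_m)$ and set $F_{m+1} := F_m \cup \{x_{m+1}\}$. By the choice of $x_{m+1}$, we have $x_{m+1} - x_i \notin \D_\epsilon(C)$ for every $x_i \in F_m$. Suppose for contradiction that the process reached some $F$ with $|F| = k+1$; since $k = \lfloor\frac{\gamma-\epsilon}{\gamma^2-\epsilon}\rfloor$ implies $k+1 > \frac{\gamma-\epsilon}{\gamma^2-\epsilon}$, Lemma \ref{lemmazero} applied to $\{C_x : x \in F\}$ would furnish distinct $x_p, x_q \in F$ with $p < q$ such that $\text{st}(|C_{x_p} \cap C_{x_q}|/N) > \epsilon$, whence by the displayed equivalence $x_q - x_p \in \D_\epsilon(C)$ — contradicting how $x_q$ was chosen. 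Therefore the process terminates with some $F \ni x$ of cardinality at most $k$ satisfying $X \subseteq \D_\epsilon(C) + F$, which is exactly the claim.

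The only point calling for care is the handling of boundary effects when shifting $C$ by elements of $X$: because the shifts are by standard integers while $N$ is infinite, all such boundary corrections contribute only an infinitesimal amount to $|\cdot|/N$ and vanish on taking standard parts, which is precisely what allows the densities of the $C_x$ to equal $\gamma$ and feed cleanly into Lemma \ref{lemmazero}.
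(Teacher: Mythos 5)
Your proof is correct and follows essentially the same route as the paper's own argument: a greedy construction of $F$ starting from $\{x\}$, together with an application of Lemma~\ref{lemmazero} to the shifted internal sets $C_{x_i}=(C-x_i)\cap[1,N]$ (whose relative densities keep standard part $\gamma$ because the shifts are by standard integers) to contradict the possibility that the process survives to a set of size $k+1$. The explicit observation that $\text{st}(|C_x\cap C_y|/N)>\epsilon$ if and only if $y-x\in\D_\epsilon(C)$, with the boundary corrections infinitesimal relative to $N$, is exactly the point the paper makes by noting that $x_i/N\approx 0$.
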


\begin{proof}
We proceed by induction, and define the finite subset
$F=\{x_i\}_{i=1}^m\subset X$ as follows.
Put $x_1=x$. If $X\subseteq\D_\epsilon(C)+x_1$ then
put $F=\{x_1\}$ and stop. Otherwise pick $x_2\in X$ such that
$x_2\notin\D_\epsilon(C)+x_1$. Then $x_2-x_1$
does not belong to $\D_\epsilon(C)$. So,
$\text{st}(|C\cap(C-x_2+x_1)|/N)\le\epsilon$, and hence also
$\text{st}(|(C-x_1)\cap(C-x_2)|/N)\le\epsilon$,
because $x_1/N\approx 0$. Next, if
$X\subseteq\bigcup_{i=1}^2\left(\D_\epsilon(C)+x_i\right)$,
put $F=\{x_1,x_2\}$ and stop. Otherwise
pick a witness $x_3\in X$ such that $x_3\notin\bigcup_{i=1}^2\D_\epsilon(C)+x_i$.
Then $\text{st}(|C\cap(C-x_3+x_i)|/N)\le\epsilon$ for $i=1,2$, and so also
$\text{st}(|(C-x_i)\cap(C-x_3)|/N)\le\epsilon$, because $x_i/N\approx 0$.
We iterate this process. We now show that the procedure must stop \emph{before} step $k+1$.
If not, one could consider the family $\{C_i\mid i\in[1,k+1]\}$
where $C_i=(C-x_i)\cap[1,N]$. Clearly, $\text{st}(|C_i|/N)=\text{st}(|C|/N)=\gamma$
for all $i$, and by the previous lemma one would have
$\text{st}(|(C-x_i)\cap(C-x_j)|/N)>\epsilon$ for suitable $i,j\in[1,k+1]$,
a contradiction. We conclude that
the cardinality of $F=\{x_i\}_{i=1}^m$ has the desired bound and
$X\subseteq\D_\epsilon(C)+F$.
\end{proof}

\medskip
We now use the above \emph{nonstandard} properties to prove
a general result for sets of positive density.

\medskip
\begin{theorem}\label{theoremone}
Let $\text{BD}(A)=\alpha>0$ (or $\overline{d}(A)=\alpha>0$), and
let $0\le\epsilon<\alpha^2$. Then for every infinite $X\subseteq\Z$
and for every $x\in X$ there exists a finite subset $F\subset X$ such that:

\smallskip
\begin{enumerate}
\item
$x\in F$;

\smallskip
\item
$|F|\le\lfloor\frac{\alpha-\epsilon}{\alpha^2-\epsilon}\rfloor$;

\smallskip
\item
$X\subseteq\Delta_\epsilon(A)+F$
(or $X\subseteq\overline{\Delta}_\epsilon(A)+F$, respectively).
\end{enumerate}
\end{theorem}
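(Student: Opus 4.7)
The plan is to reduce Theorem \ref{theoremone} to Lemma \ref{lemmaone} by passing to an internal subset of $[1,N]$ whose relative cardinality realizes the given density.

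First, use the nonstandard characterizations recalled earlier. In the Banach-density case, since $\text{BD}(A)=\alpha$, pick an infinite interval $I=[\Omega+1,\Omega+N]\subset\hZ$ with $\text{st}(|\hA\cap I|/N)\ge\alpha$, and set $C=(\hA-\Omega)\cap[1,N]$, which is internal and satisfies $\text{st}(|C|/N)\ge\alpha$. In the upper-density case, pick an infinite $N$ with $\text{st}(|\hA\cap[1,N]|/N)\ge\alpha$ and set $C=\hA\cap[1,N]$ directly. Either way, we obtain an internal $C\subseteq[1,N]$ with $\text{st}(|C|/N)=\gamma$ for some $\gamma\ge\alpha$; without loss of generality we may take $\gamma=\alpha$ by shrinking $C$ if needed (this only relaxes the required bound on $|F|$).

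Next apply Lemma \ref{lemmaone} to $C$, to the real number $\epsilon<\alpha^2$, to the infinite set $X$ and to the given $x\in X$. This produces a finite $F\subset X$ containing $x$, with $|F|\le k=\lfloor\tfrac{\alpha-\epsilon}{\alpha^2-\epsilon}\rfloor$, such that $X\subseteq\D_\epsilon(C)+F$. Clauses (1) and (2) of the theorem are immediate; only clause (3) still needs justification.

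For clause (3) one must verify $\D_\epsilon(C)\subseteq\Delta_\epsilon(A)$ (respectively $\D_\epsilon(C)\subseteq\overline{\Delta}_\epsilon(A)$). Fix $t\in\D_\epsilon(C)$. The internal sets $C\cap(C-t)$ and $\,{}^*\!(A\cap(A-t))\cap I\,$ (with $I$ shifted back by $\Omega$ if necessary) differ only on the two end-segments of length $|t|$, so
\[
\frac{|C\cap(C-t)|}{N}\ \approx\ \frac{|{}^*\!(A\cap(A-t))\cap I|}{N},
\]
because $|t|/N\approx 0$. Hence $\text{st}(|{}^*\!(A\cap(A-t))\cap I|/|I|)>\epsilon$, which by the nonstandard characterization of Banach density (respectively, upper density, when $I=[1,N]$) yields $\text{BD}(A\cap(A-t))>\epsilon$ (resp.\ $\overline{d}(A\cap(A-t))>\epsilon$), i.e.\ $t\in\Delta_\epsilon(A)$ (resp.\ $t\in\overline{\Delta}_\epsilon(A)$). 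This gives $X\subseteq\Delta_\epsilon(A)+F$, finishing clause (3).

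The argument is essentially a transfer: Lemma \ref{lemmaone} does all the combinatorial heavy lifting via Cauchy–Schwartz, and the only real task here is the bookkeeping to identify the abstract set $\D_\epsilon(C)$ with the external set $\Delta_\epsilon(A)$ (or $\overline{\Delta}_\epsilon(A)$). The one subtlety to watch is that the witnesses $x_i\in X$ produced by the iterative construction in Lemma \ref{lemmaone} may be arbitrary integers, possibly negative or very large in absolute value; but since each $x_i$ is a fixed \emph{standard} integer while $N$ is infinite, all quotients $x_i/N$ are infinitesimal and the end-segment error in the displayed approximation is indeed negligible. No genuine obstacle arises once the right internal $C$ has been chosen at the very first step.
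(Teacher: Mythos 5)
Your proof is correct and follows essentially the same route as the paper: choose an infinite interval realizing the density via the nonstandard characterization, pass to the internal set $C$, invoke Lemma~\ref{lemmaone}, and then verify the inclusion $\D_\epsilon(C)\subseteq\Delta_\epsilon(A)$ (resp.\ $\subseteq\overline{\Delta}_\epsilon(A)$). The only cosmetic differences are that the paper selects $I$ with $|\hA\cap I|/N\approx\alpha$ outright (so no ``shrinking'' of $C$ is needed), and it records the inclusion $\D_\epsilon(C)\subseteq\Delta_\epsilon(A)$ directly through an equality of standard parts, implicitly absorbing the end-segment error you make explicit.
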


\begin{proof}
By the hypothesis $\text{BD}(A)=\alpha$, there exists an infinite hypernatural
number $N\in\hN$ and an hyperinteger $\Omega\in\hZ$ such that
$$\frac{|\hA\cap [\Omega+1,\Omega+N]|}{N}\ \approx\ \alpha.$$
Then $C=(\hA-\Omega)\cap[1,N]$ is an internal
subset of $[1,N]$ with $\text{st}(|C|/N)=\alpha>0$.
By Lemma \ref{lemmaone}, there exists a finite set $F\subset X$
with $x\in F$, $|F|\le\lfloor{(\alpha-\epsilon)}/{(\alpha^2-\epsilon)}\rfloor$
and such that $X\subseteq\D_\epsilon(C)+F$.
To reach the thesis, it is now enough to show that
$\D_\epsilon(C)\subseteq\Delta_\epsilon(A)$.
To see this, take an arbitrary $t\in\D_\epsilon(C)$. Then
\begin{eqnarray}
\nonumber
\text{BD}(A\cap(A-t)) & \ge &
\text{st}\left(\frac{|{}^*(A\cap(A-t))\cap[\Omega+1,\Omega+N]|}{N}\right)\ =\
\\
\nonumber
{} & = & \text{st}\left(\frac{|C\cap (C-t)|}{N}\right)\ >\ \epsilon\,.
\end{eqnarray}

Under the assumption that the upper asymptotic density $\overline{d}(A)=\alpha>0$,
one applies the same argument as above where $\Omega=0$,
and obtains $\D_\epsilon(C)\subseteq\overline{\Delta}_\epsilon(A)$.
\end{proof}

\medskip
As the particular case when $X=\Z$ and $\epsilon=0$,
the above theorem gives a small improvement of a result by I.Z.~Ruzsa
(\emph{cf.} \cite{ru2} Theorem 2), which was a refinement of
a previous result by C.L.~Stewart and R.~Tijdeman \cite{st1}.\footnote
{~The improvement here is that under the hypothesis $\text{BD}(A)=\alpha>0$,
in \cite{ru2} it is proved the weaker property that $\lfloor 1/\alpha\rfloor$-many
shifts of $\{t\in\Z\mid |A\cap(A+t)|=\infty\}$ cover $\Z$.}

\smallskip
For $h\in\N$, denote by

\smallskip
\begin{itemize}
\item
$h\,B=\{h\,b\mid b\in B\}$ the set of $h$-multiples
of elements of $B$\,;

\smallskip
\item
$B/h=\{x\mid h\,x\in B\}$ the set of integers whose $h$-multiples belong to $B$.
\end{itemize}

\smallskip
By taking $X=h\,\Z$ as the set of multiples of a number $h$, one gets the following.

\medskip
\begin{corollary}
Let $\text{BD}(A)=\alpha>0$ (or $\overline{d}(A)=\alpha>0$),
let $0\le\epsilon<\alpha^2$, and let
$k=\lfloor\frac{\alpha-\epsilon}{\alpha^2-\epsilon}\rfloor$.
Then for every $h\in\Z$ there exists a finite set $|F|\le k$
such that $\Z=\Delta_\epsilon(A)/h+F$
(or $\Z=\overline{\Delta}_\epsilon(A)/h+F$, respectively).
In consequence, $\Delta_\epsilon(A)/h$ is syndetic and
$\underline{\text{BD}}\,(\Delta_\epsilon(A)/h)\ge 1/k$
(or $\overline{\Delta}_\epsilon(A)/h$ is syndetic
and $\underline{\text{BD}}\,(\overline{\Delta}_\epsilon(A)/h)\ge 1/k$, respectively).
\end{corollary}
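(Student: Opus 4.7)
The plan is to deduce this corollary directly from Theorem \ref{theoremone} by taking $X = h\Z$ and then translating the covering of $h\Z$ by shifts of $\Delta_\epsilon(A)$ into a covering of $\Z$ by shifts of $\Delta_\epsilon(A)/h$. The lower Banach density lower bound is then immediate from Proposition \ref{lowerbanach}(1).

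First I would dispose of the trivial case $h=0$: since $0\le\epsilon<\alpha^2\le\alpha=\text{BD}(A\cap(A-0))$, we have $0\in\Delta_\epsilon(A)$, so $\Delta_\epsilon(A)/0=\Z$ and $F=\{0\}$ works. For $h\ne 0$, I would apply Theorem \ref{theoremone} to the infinite set $X=h\Z$, picking the base point $x=0\in X$. This produces a finite subset $F'\subset h\Z$ with $0\in F'$, $|F'|\le k$, and $h\Z\subseteq\Delta_\epsilon(A)+F'$. Since $F'\subseteq h\Z$, I can write $F'=h\,F$ for a unique finite set $F\subseteq\Z$ with $|F|=|F'|\le k$.

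Next I would verify that $\Z=\Delta_\epsilon(A)/h+F$. Given any $n\in\Z$, the element $hn$ lies in $h\Z\subseteq\Delta_\epsilon(A)+F'$, so $hn=d+hf$ for some $d\in\Delta_\epsilon(A)$ and $f\in F$. Then $h(n-f)=d\in\Delta_\epsilon(A)$, which by definition of the dilate means $n-f\in\Delta_\epsilon(A)/h$, hence $n\in\Delta_\epsilon(A)/h+F$. Since the reverse inclusion is trivial, we obtain $\Z=\Delta_\epsilon(A)/h+F$ with $|F|\le k$, which is the desired covering. Syndeticity of $\Delta_\epsilon(A)/h$ is then immediate, and Proposition \ref{lowerbanach}(1) yields $\underline{\text{BD}}\,(\Delta_\epsilon(A)/h)\ge 1/k$.

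The $\overline{d}$-version is handled in exactly the same way, invoking the alternative conclusion of Theorem \ref{theoremone} to obtain $h\Z\subseteq\overline{\Delta}_\epsilon(A)+F'$ and then replacing $\Delta_\epsilon(A)$ by $\overline{\Delta}_\epsilon(A)$ throughout. No step looks to be an obstacle: the argument is essentially a change of variables, and the only mild subtlety is the observation that because $F'$ is chosen inside $X=h\Z$, its elements are divisible by $h$, so the preimage set $F$ is well defined in $\Z$. Everything quantitative (the bound $k$ and the density estimate $1/k$) is inherited directly from Theorem \ref{theoremone} and Proposition \ref{lowerbanach}.
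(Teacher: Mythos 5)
Your proof is correct and follows essentially the same route as the paper: apply Theorem \ref{theoremone} with $X=h\Z$, unwind the definition of the dilate $\Delta_\epsilon(A)/h$ to turn $h\Z\subseteq\Delta_\epsilon(A)+hF$ into $\Z=\Delta_\epsilon(A)/h+F$, and invoke Proposition \ref{lowerbanach}. The paper also handles $h=0$ via a footnote exactly as you do, so the two arguments match in every essential respect; you merely spell out the change of variables more explicitly.
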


\begin{proof}
Assume first that $\text{BD}(A)=\alpha>0$.
By applying the above theorem with $X=h\,\Z$,
one obtains the existence of a finite set $h\,F\subset h\,\Z$
with $|h\,F|=|F|\le k$ and such that
$h\,\Z\subseteq\Delta_\epsilon(A)+h\,F$.\footnote
{~We assumed $h\ne 0$. Notice that if $h=0$, then trivially $\Delta_\epsilon(A)/h=\Z$
because $0\in\Delta_\epsilon(A)$.}  But then
it follows that $\Z=\Delta_\epsilon(A)/h+F$, and thus $\Delta_\epsilon(A)/h$ is syndetic.
Finally, the last property in the statement follows
by Proposition \ref{lowerbanach}. The second part
of the proof where one assumes $\overline{d}(A)=\alpha>0$
is entirely similar.
\end{proof}

\smallskip
There are potentially many examples to illustrate consequences
of Theorem \ref{theoremone}. For instance, assume that a set $A$ has
Banach density $\text{BD}(A)=\alpha=1/2+\delta$ for $\delta>0$. Then we can
conclude that $\text{BD}(A\cap(A-t))\ge\delta+2\delta^2$
for all $t\in\Z$. In fact, given any $\epsilon<\delta+2\,\delta^2$, we have that
${(\alpha-\epsilon)}/{(\alpha^2-\epsilon)}<2$ and so, by
taking $X=\Z$, it follows that $\Delta_\epsilon(A)=\Z$.
It seems worth investigating the possibility of deriving other
consequences of Theorem \ref{theoremone},
by means of suitable choices of the set $X$.

\bigskip
\section{Finite embeddability}\label{sec-embeddability}

As already remarked in Section \ref{sec-preliminaries},
the finite embeddability relation  (see Definition \ref{def-finiteembeddability})
preserves the finite combinatorial structure
of sets, including many familiar notions considered in combinatorics
of integer numbers. A first list is given below.
(All proofs follow from the definitions in
a straightforward manner, and are omitted.)

\medskip
\begin{proposition}
\

\begin{enumerate}
\item
A set is $\lhd$-maximal if and only if it is $\lhd_d$-maximal
if and only if it is thick.

\smallskip
\item
If $X\lhd Y$ and $X$ is piecewise syndetic,
then also $Y$ is piecewise syndetic.

\smallskip
\item
If $X\lhd Y$ and $X$ contains an arithmetic progression of length $k$, then
also $Y$ contains an arithmetic progression of length $k$.

\smallskip
\item
If $X\lhd_d Y$ and if $X$ contains an arithmetic progression of
length $k$ and common distance $d$, then
$Y$ contains ``densely-many'' such arithmetic progressions,
\emph{i.e.}
$\text{BD}\left(\{x\in\Z\mid x, x+d, \ldots, x+(k-1)d\in Y\}\right)>0$.

\smallskip
\item
If $X\lhd Y$ then $\text{BD}(X)\le\text{BD}(Y)$.
\end{enumerate}
\end{proposition}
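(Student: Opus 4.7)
The plan is to verify each of the five items by a direct unpacking of the definitions of $\lhd$ and $\lhd_d$, using only the shift-invariance of Banach density and the standard characterization of thickness (containing arbitrarily long intervals). No nonstandard machinery should be required here.

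For (1), the easy direction is that a thick $Y$ is $\lhd_d$-maximal (and hence also $\lhd$-maximal): given any finite $F$, if $Y\supseteq[a,a+n]$ then every $t\in[a-\min F,\,a+n-\max F]$ satisfies $t+F\subseteq Y$, and letting $n\to\infty$ shows that $\{t\mid t+F\subseteq Y\}$ contains arbitrarily long intervals, so in fact has Banach density $1$. For the converse, I would apply $\lhd$-maximality to $X=\Z$: every $[1,n]$ must then shift into $Y$, so $Y$ is thick. Part (3) is similarly immediate, as an arithmetic progression of length $k$ in $X$ is itself a finite configuration and so has a shifted copy in $Y$.

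For (2), assume $X$ is piecewise syndetic and fix a finite $F$ with $X+F$ thick. For each $n$, I would pick an interval $I_n$ of length $n$ with $I_n\subseteq X+F$ and set $X_n=X\cap(I_n-F)$, a finite subset of $X$. By $X\lhd Y$ there is a shift $t_n$ with $t_n+X_n\subseteq Y$, and therefore $t_n+I_n\subseteq t_n+X_n+F\subseteq Y+F$, so $Y+F$ is thick and $Y$ is piecewise syndetic. Item (4) proceeds along the same lines: if $F=\{a,a+d,\ldots,a+(k-1)d\}\subseteq X$, then $\{t\mid t+F\subseteq Y\}$ has positive Banach density by $X\lhd_d Y$, and this set is a translate (by $-a$) of $\{x\mid x,x+d,\ldots,x+(k-1)d\in Y\}$, which therefore also has positive Banach density by shift-invariance.

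Finally, for (5), given $\text{BD}(X)=\alpha$ and $\eta>0$, I would choose an interval $I$ of length $n$ with $|X\cap I|\ge(\alpha-\eta)n$; the finite set $F=X\cap I\subseteq X$ embeds via some shift $t$ with $t+F\subseteq Y$, giving $|Y\cap(t+I)|\ge(\alpha-\eta)n$ on an interval of length $n$, so $\text{BD}(Y)\ge\alpha-\eta$, and letting $\eta\to 0$ yields the conclusion. The only mildly subtle point in the whole proposition is in (1), where one must recognize that the a priori weak requirement of $\lhd$-maximality (every finite configuration of \emph{every} $X$ shifts into $Y$) already forces $Y$ to be thick, which then automatically upgrades the conclusion to $\lhd_d$-maximality; everything else is a routine unwinding of the definitions, which is presumably why the author chose to omit the proofs.
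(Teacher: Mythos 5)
The paper explicitly omits all proofs of this proposition ("All proofs follow from the definitions in a straightforward manner, and are omitted"), so there is no paper proof to compare against; your direct unwinding of the definitions is exactly the intended approach, and items (1)--(4) are correct and cleanly argued. In (1), the interpretation of $\lhd$-maximal as ``$X\lhd Y$ for all $X$'' is the right one (the paper later writes ``$\Z\lhd_d Y$ if and only if $Y$ is $\lhd_d$-maximal if and only if $Y$ is thick''), and your cycle thick $\Rightarrow$ $\lhd_d$-maximal $\Rightarrow$ $\lhd$-maximal $\Rightarrow$ thick is exactly right. In (2), the reduction $I_n\subseteq X_n+F$ with $X_n=X\cap(I_n-F)$ finite is the key observation and it works.

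The only place that needs tightening is (5). As written you choose a \emph{single} interval $I$ of length $n$ with $|X\cap I|\ge(\alpha-\eta)n$, embed $F=X\cap I$ into $Y$, and then conclude $\text{BD}(Y)\ge\alpha-\eta$. That inference does not go through from one interval: a finite set $Y$ can have density $1$ on some interval of a fixed length $n$ while $\text{BD}(Y)=0$. What saves the argument is that the same construction works for \emph{every} $n$ (indeed with $\eta=0$): since $\text{BD}(X)=\inf_n\max_x|X\cap[x+1,x+n]|/n=\alpha$, for every $n$ there is an interval $I_n$ of length $n$ with $|X\cap I_n|\ge\alpha n$; embedding $F_n=X\cap I_n$ via a shift $t_n$ gives $|Y\cap(t_n+I_n)|\ge\alpha n$, so $\max_x|Y\cap[x+1,x+n]|/n\ge\alpha$ for all $n$, and taking the infimum over $n$ yields $\text{BD}(Y)\ge\alpha$. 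So the idea is right, but you must let $n$ range and appeal to the inf (or limit) characterization of Banach density, not conclude from one interval.
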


\medskip
Remark that while piecewise syndeticity is preserved
under $\lhd$, the property of being syndetic is \emph{not}. Similarly,
the upper Banach density is preserved or increased under $\lhd$,
but upper asymptotic density is \emph{not}.
Another list of basic properties of embeddability that
are relevant to our purposes are itemized below.

\medskip
\begin{proposition}
\

\begin{enumerate}
\item
If $X\lhd Y$ and $Y\lhd Z$ then $X\lhd Z$.

\smallskip
\item
If $X\lhd Y$ and $Y\lhd_d Z$ then $X\lhd_d Z$.

\smallskip
\item
If $X\lhd_d Y$ and $Y\lhd Z$ then $X\lhd_d Z$.

\smallskip
\item
If $X\lhd Y$ then $\Delta(X)\subseteq\Delta(Y)$.

\smallskip
\item
If $X\lhd_d Y$ then $\Delta(X)\subseteq\Delta_0(Y)$.

\smallskip
\item
If $X\lhd Y$ and $X'\lhd Y'$ then $X-X'\lhd Y-Y'$.

\smallskip
\item
If $X\lhd_d Y$ and $X'\lhd Y'$ then $X-X'\lhd_d Y-Y'$.

\smallskip
\item
If $X\lhd Y$ then $\bigcap_{t\in G}(X-t)\lhd \bigcap_{t\in G}(Y-t)$ for every finite $G$.

\smallskip
\item
If $X\lhd_d Y$ then $\bigcap_{t\in G}(X-t)\lhd_d\bigcap_{t\in G}(Y-t)$ for every finite $G$.

\smallskip
\item
If $X\lhd Y$ then
$\Delta_\epsilon(X)\subseteq\Delta_\epsilon(Y)$ for all $\epsilon\ge 0$.
\end{enumerate}
\end{proposition}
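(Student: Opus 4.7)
The plan is to verify each item by unwinding the definitions of $\lhd$ and $\lhd_d$. The general strategy is: take a finite subset of the source set, apply embeddability to produce a shifted copy in the target, and verify that the required property survives. For the density statements, I will repeatedly use that the witness set $\{u:u+F\subseteq W\}$ is translation-equivariant in its parameters and that upper Banach density is shift-invariant.

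\emph{Composition items} (1)--(3). For (1), a finite $F\subseteq X$ is first shifted into $Y$ by $X\lhd Y$, then into $Z$ by $Y\lhd Z$ applied to that finite subset of $Y$. For (2), the witness set for $F$ in $Z$ equals a shift of the witness set for the image $t+F$ in $Z$, so positive Banach density passes from one to the other. For (3), the key is to promote the test set: since $T_F=\{t:t+F\subseteq Y\}$ has positive Banach density by $X\lhd_d Y$, for arbitrarily large $n$ one can find an interval $I_n$ of length $n$ with $G_n:=T_F\cap I_n$ of size at least $\tfrac{1}{2}\text{BD}(T_F)\cdot n$. The finite set $G_n+F\subseteq Y$ admits, by $Y\lhd Z$, a shift $s_n$ with $s_n+G_n+F\subseteq Z$; then $s_n+G_n$ sits inside the witness set for $F$ in $Z$ within the interval $s_n+I_n$, forcing that witness set to have positive Banach density.

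\emph{Differences and intersections} (4)--(9). For (4), embed the pair $\{x_1,x_2\}\subseteq X$ realizing $d=x_1-x_2$ and read off the same difference in $Y$. For (5), the same pair's witness set coincides with a shift of $Y\cap(Y-d)$, so $X\lhd_d Y$ forces $\text{BD}(Y\cap(Y-d))>0$, i.e. $d\in\Delta_0(Y)$. For (6)--(7), given a finite $F=\{x_i-x_i'\}\subseteq X-X'$, one embeds $\{x_i\}\subseteq X$ by some $t$ and $\{x_i'\}\subseteq X'$ by some $s$ independently, whence $(t-s)+F\subseteq Y-Y'$; for (7), let $t$ range over the positive-density witness set supplied by $X\lhd_d Y$ while fixing a single $s$. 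For (8)--(9), if $F\subseteq\bigcap_{t\in G}(X-t)$, then $F+G\subseteq X$, and any shift of $F+G$ into $Y$ automatically places $F$ into $\bigcap_{t\in G}(Y-t)$; in (9) the valid shifts inherit positive Banach density from the corresponding witness set for $F+G$ in $Y$.

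\emph{The density-Delta transfer} (10). This is the item requiring the most care, and is the main obstacle. Given $d\in\Delta_\epsilon(X)$, set $\alpha=\text{BD}(X\cap(X-d))>\epsilon$ and choose $\epsilon'$ with $\epsilon<\epsilon'<\alpha$. For every $n$, the $\inf\max$ characterization of Banach density provides an interval $I_n$ of length $n$ with $F_n:=X\cap(X-d)\cap I_n$ of size at least $\epsilon' n$. The finite set $F_n\cup(F_n+d)$ lies in $X$, so $X\lhd Y$ furnishes a shift $t_n$ sending it into $Y$; consequently $t_n+F_n\subseteq Y\cap(Y-d)$ is contained in the length-$n$ interval $t_n+I_n$ with density at least $\epsilon'$. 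Letting $n\to\infty$ yields $\text{BD}(Y\cap(Y-d))\ge\epsilon'>\epsilon$, hence $d\in\Delta_\epsilon(Y)$. The subtle point is that one must thicken the finite witness from $F_n$ to $F_n\cup(F_n+d)$ before invoking $X\lhd Y$, so that the single shifted copy in $Y$ certifies membership in both $Y$ and $Y-d$ simultaneously.
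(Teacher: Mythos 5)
Your proposal is correct and proceeds item by item with essentially the same arguments as the paper. The only cosmetic difference is in items (3) and (10), where the paper reduces to the previously established facts that $\lhd$ preserves intersections and cannot decrease upper Banach density, whereas you re-derive that monotonicity inline via explicit intervals of high relative density.
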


\begin{proof}
$(1)$ is straightforward from the definition of $\lhd$.

\smallskip
$(2)$. Given a finite $F\subseteq X$, pick $t$ such that
$t+F\subseteq Y$. As the Banach density is shift invariant, we have:
$$\text{BD}\left(\bigcap_{x\in F}Z-x\right)\ =\
\text{BD}\left(\bigcap_{x\in F}Z-x-t\right)\ =\
\text{BD}\left(\bigcap_{s\in t+F}Z-s\right)\ >\ 0.$$

\smallskip
$(3)$. Given a finite $F\subseteq X$, denote by
$A=\bigcap_{x\in F}(Y-x)$ and by $B=\bigcap_{x\in F}(Z-x)$.
By the hypothesis $X\lhd_d Y$, we know that
$\text{BD}(A)>0$. If we show that $A\lhd B$, then
the thesis will follow from item $(5)$ of the previous proposition.
Let $G\subseteq A$ be finite; then for all $x\in F$ and for all $\xi\in G$,
we have $x+\xi\in Y$, \emph{i.e.} $F+G\subseteq Y$. By the hypothesis $Y\lhd Z$,
we can pick $t$ such that $t+F+G\subseteq Z$, and hence
$t+G\subseteq\bigcap_{x\in F}(Z-x)$, as desired.

\smallskip
$(4)$. Given $x,x'\in X$, by the hypothesis
we can pick $t$ such that $t+\{x,x'\}\subseteq Y$.
But then $x-x'=(t+x)-(t+x')\in\Delta(Y)$.

\smallskip
$(5)$. For $x,x'\in X$, we have that $\text{BD}(Y\cap(Y-x+x'))=
\text{BD}((Y-x')\cap(Y-x))>0$, and so $x-x'\in\Delta_0(Y)$.

\smallskip
$(6)$. Given a finite $F\subseteq X-X'$, let $G\subseteq X$ and $G'\subseteq X'$
be finite sets such that $F\subseteq G-G'$. By the hypotheses,
there exist $t,t'$ such that $t+G\subseteq Y$ and
$t'+G'\subseteq Y'$. Then
$(t-t')+F\subseteq(t+G)-(t'+G')\subseteq Y-Y'$.

\smallskip
$(7)$. As above, given a finite $F\subseteq X-X'$, pick finite $G\subseteq X$
and $G'\subseteq X'$ such that $F\subseteq G-G'$.
By the hypothesis $X\lhd_d Y$,
the set $\Gamma=\{t\mid t+G\subseteq Y\}$ has positive
upper Banach density; and by the hypothesis $X'\lhd Y'$,
there exists an element $s$ such that $s+G'\subseteq Y'$.
For all $t\in\Gamma$, we have that $t-s+F\subseteq t-s+(G-G')=(t+G)-(t'+G')\subseteq Y-Y'$.
This shows that $\Gamma-s\subseteq \{w\mid w+F\subseteq Y-Y'\}$, and we
conclude that also the latter set has positive
upper Banach density, as desired.

\smallskip
$(8)$. Let a finite set $F\subseteq\bigcap_{t\in G}(X-t)$ be given.
Notice that $F+G\subseteq X$, so we can pick an element $w$ such that
$w+(F+G)\subseteq Y$. Then $w+F\subseteq\bigcap_{t\in G}Y-t$.

\smallskip
$(9)$. Proceed as above, by noticing that the set
$\{w\mid w+F\subseteq\bigcap_{t\in G}Y-t\}$
has positive Banach density in that a superset of
$\{w\mid w+F+G\subseteq Y\}$.

\smallskip
$(10)$. By property $(8)$, it follows that
$(X\cap(X-t))\lhd(Y\cap(Y-t))$ for every $t$.
This implies that $\text{BD}(X\cap(X-t))\le\text{BD}(Y\cap(Y-t))$, and
the desired inclusion follows.
\end{proof}

\medskip
In a nonstandard setting, the finite embeddability $X\lhd Y$
amounts to the property that a (possibly infinite) shift of $X$ is
included in the hyper-extension $\hY$. This notion can also be
characterized in terms of \emph{ultrafilter-shifts}, as defined
by M.~Beiglb\"ock in \cite{bei}.

\medskip
\begin{proposition}\label{fe}
Let $X,Y\subseteq\Z$. Then the following are equivalent:

\begin{enumerate}
\item
$X\lhd Y$.

\smallskip
\item
$\mu+X\subseteq\hY$ for some $\mu\in\hZ$.

\smallskip
\item
There exists an ultrafilter $\U$ on $\Z$ such that
$X$ is a subset of the ``$\U$-shift'' of $Y$, namely
$Y-\U=\{t\in\Z\mid Y-t\in\U\}\supseteq X$.
\end{enumerate}
\end{proposition}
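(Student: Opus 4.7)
The plan is to prove the cycle $(2)\Rightarrow(1)$ by transfer, $(1)\Rightarrow(2)$ by overspill, and $(2)\Leftrightarrow(3)$ through the standard correspondence between hyperintegers and ultrafilters on $\Z$; the only non-routine step is the overspill argument for $(1)\Rightarrow(2)$.

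For $(2)\Rightarrow(1)$, given $\mu+X\subseteq\hY$ and any finite $F\subseteq X$, the hyperinteger $\mu$ witnesses the internal statement $\exists t\in\hZ\,(t+F\subseteq\hY)$; since $F$ is a standard finite set, transfer produces $t\in\Z$ with $t+F\subseteq Y$, so $X\lhd Y$.

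For $(1)\Rightarrow(2)$, the finite case is trivial, so assume $X$ is infinite and enumerate it as a standard sequence $f\colon\N\to\Z$ with range $X$, writing $x_i=f(i)$. Using $X\lhd Y$, for each $n\in\N$ choose a shift $t_n\in\Z$ such that $t_n+f(i)\in Y$ for all $i\le n$; setting $g(n)=t_n$ defines a standard function $g\colon\N\to\Z$. The set
\[
I\;=\;\bigl\{\nu\in\hN\mid\forall i\in\hN\,(i\le\nu\Rightarrow{}^*g(\nu)+{}^*f(i)\in\hY)\bigr\}
\]
is internal, and by construction contains every $n\in\N$. By overspill, $I$ contains some infinite $N$. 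Letting $\mu={}^*g(N)$, for each standard $i\in\N$ we have $i\le N$ and ${}^*f(i)=x_i$, hence $\mu+x_i\in\hY$; thus $\mu+X\subseteq\hY$. The one point requiring care is packaging the choices of the $t_n$ into a genuine standard function $g$ so that $I$ is indeed internal, allowing overspill to apply.

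For $(2)\Leftrightarrow(3)$, I would invoke the classical fact that each $\mu\in\hZ$ determines an ultrafilter $\U_\mu=\{A\subseteq\Z\mid\mu\in\hA\}$ on $\Z$, and conversely, in a sufficiently saturated nonstandard extension every ultrafilter on $\Z$ is realized as $\U_\mu$ for some $\mu\in\hZ$. The chain of equivalences
\[
\mu+x\in\hY\;\Longleftrightarrow\;\mu\in{}^*(Y-x)\;\Longleftrightarrow\;Y-x\in\U_\mu\;\Longleftrightarrow\;x\in Y-\U_\mu
\]
holds for every $x\in\Z$, so that $\mu+X\subseteq\hY$ if and only if $X\subseteq Y-\U_\mu$, yielding both directions at once.
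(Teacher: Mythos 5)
Your proof is correct, but the decomposition differs from the paper's in a way worth noting. The paper proves the cycle $(1)\Rightarrow(2)\Rightarrow(3)\Rightarrow(1)$; you instead prove $(1)\Leftrightarrow(2)$ and $(2)\Leftrightarrow(3)$. The directions $(2)\Rightarrow(1)$, $(1)\Rightarrow(2)$, and $(2)\Rightarrow(3)$ all match the paper's arguments in substance. (Your $(1)\Rightarrow(2)$ packages the shift witnesses into an explicit standard function $g$ and applies overspill to an internal set built from ${}^*g$ and ${}^*f$; the paper does the same overspill, but applied more economically to the internal set of $\nu$ for which $\bigcap_{i\le\nu}(\hY-{}^*\!f(i))\ne\emptyset$, picking $\mu$ in that nonempty intersection afterward, so no choice function $g$ is needed. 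You are right, though, to flag that the $t_n$ must be packaged as a standard function for the set to be internal.)

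The substantive difference is in closing the circle. You prove $(3)\Rightarrow(2)$ by appealing to the fact that in a sufficiently saturated model every ultrafilter on $\Z$ is realized as some $\U_\mu$. That is true, but it imports a saturation hypothesis: realizing an arbitrary ultrafilter $\U$ requires $\bigcap_{A\in\U}\hA\ne\emptyset$, which demands $(2^{\aleph_0})^+$-saturation (or the paper's assumption of ``complete extensions''), and in a generic ultrapower of $\Z$ this can fail. The paper sidesteps this entirely by proving $(3)\Rightarrow(1)$ directly: for any finite $F\subset X$, the set $\bigcap_{x\in F}(Y-x)$ is a finite intersection of members of $\U$, hence nonempty, and any $t$ in it witnesses $t+F\subseteq Y$. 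That argument is elementary and makes the implication $(3)\Rightarrow(1)$ model-independent, so the equivalence of all three conditions holds without invoking saturation. Your approach is valid under the paper's stated framework, but the paper's cycle is both shorter and more robust; if you keep the $(3)\Rightarrow(2)$ route, you should state the saturation hypothesis explicitly.
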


\begin{proof}
$(1)\Rightarrow(2)$. Let $X=\{x_n\mid n\in\N\}$.
By the hypothesis $X\lhd Y$, for every $n\in\N$, the finite intersection
$\bigcap_{i=1}^n(Y-x_i)\ne\emptyset$. Then, by \emph{overspill},
there exists an infinite $N\in\hN$ such that
$\bigcap_{i=1}^N(\hY-x_i)$ is non-empty.
If $\mu\in\hZ$ is any hyperinteger in that intersection,
then clearly $\mu+x_i\in\hY$ for all $i\in\N$.

$(2)\Rightarrow(3)$.
Let $\U=\{A\subseteq\Z\mid \mu\in\hA\}$. It is readily verified
that $\U$ is actually an ultrafilter on $\Z$. For every $x\in X$,
by the hypothesis $\mu+x\in\hY\Rightarrow\mu\in{}^*(Y-x)$,
and hence $Y-x\in\U$, \emph{i.e.} $x\in Y-\U$, as desired.

$(3)\Rightarrow(1)$. Given a finite $F\subset X$,
the set $\bigcap_{x\in F}(Y-x)$ is nonempty, because
it is a finite intersection of elements of $\U$.
If $t\in\Z$ is any element in that intersection, then
$t+F\subset Y$.
\end{proof}

\medskip
An interesting nonstandard property discovered by
R.~Jin \cite{jin2} is the fact that whenever an internal set
of hypernatural numbers $C\subseteq[1,N]\subset\hN$
has a non-infinitesimal relative density $\text{st}({|C|}/{N})=\gamma>0$, then
$C$ must include a translated copy of a set $E\subseteq\N$
whose \emph{Schnirelmann density} is
at least $\gamma$. Below, we prove the related property
that one can find a set $E\subseteq\N$ with Schnirelmann density
at least $\gamma$, and such that ``many'' translated
copies of its initial segments $E\cap[1,n]$ are exactly found in $C$.\footnote
{~The argument used in this proof is essentially due to C.L.~Stewart
and R.~Tijdeman (see Theorem 1 of \cite{st1}).}

\medskip
\begin{lemma}\label{sigmalemma}
Let $N\in\hN$ be an infinite hypernatural number, and
let $C\subseteq[1,N]$ be an internal set with $\text{st}({|C|}/{N})=\gamma>0$.
Then there exists a set $E\subseteq\N$ of natural numbers such that

\begin{enumerate}
\item
The Schnirelmann density $\sigma(E)\ge\gamma$;

\smallskip
\item
Every internal set
$\Theta_n=\left\{\theta\in[1,N]\mid (C-\theta)\cap[1,n]=E\cap[1,n]\right\}$
is such that $\text{st}({|\Theta_n|}/{N})>0$.
\end{enumerate}
\end{lemma}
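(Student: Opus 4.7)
The plan is to build $E$ one finite segment at a time, via a pigeonhole--plus--K\"onig strategy: at each level $n$ I locate a pattern $P_n \subseteq [1,n]$ that is both \emph{frequent} (non-infinitesimally many hyperinteger shifts of $C$ restrict to $P_n$) and \emph{Schnirelmann-valid} (so that $|P_n \cap [1,k]| \geq \gamma k$ for every $k \leq n$), and then splice these $P_n$ coherently into the desired $E \subseteq \N$.

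More precisely, for $n \in \N$ and $P \subseteq [1,n]$, set $T_P = \{\theta \in [1,N] : (C-\theta) \cap [1,n] = P\}$ and call $P$ \emph{admissible at level $n$} when $\text{st}(|T_P|/N) > 0$ and $|P \cap [1,k]| \geq \gamma k$ for all $k \leq n$. The parent $P \cap [1,n-1]$ of an admissible $P$ is still admissible (the corresponding $T$ only grows under restriction, and one Schnirelmann constraint drops off), so the admissible patterns form a finitely-branching tree under restriction. Assuming every level of this tree is non-empty, K\"onig's lemma supplies a coherent branch $\emptyset = P_0 \subset P_1 \subset P_2 \subset \cdots$; then $E = \bigcup_n P_n$ satisfies $\sigma(E) \geq \gamma$, and by construction $\Theta_n = T_{P_n}$ has $\text{st}(|\Theta_n|/N) > 0$.

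The substantive task is therefore the level-wise non-emptiness of admissible patterns. For this I would introduce the counting function $g(\theta) = |C \cap [1,\theta]| - \gamma\, \theta$, so that $g(\theta+k) - g(\theta) = |C \cap [\theta+1, \theta+k]| - \gamma k$, and the internal set $G_n = \{\theta : g(\theta) \leq g(\theta+j) \text{ for every } 0 \leq j \leq n\}$. By construction, $\theta \in G_n$ is exactly the condition that $(C-\theta) \cap [1,n]$ be Schnirelmann-valid. A Stewart--Tijdeman style analysis of the lattice path $g$ (starting at $0$, ending near $0$, with steps in $\{-\gamma, 1-\gamma\}$) establishes that $\text{st}(|G_n|/N) > 0$; the key point is that running minima of $g$ over length-$n$ windows produce, up to boundary effects, a positive-density set of $\theta$'s in $G_n$. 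A pigeonhole over the at most $2^n$ possible patterns of the form $(C-\theta)\cap[1,n]$ with $\theta \in G_n$ then yields an admissible $P$ realised by a non-infinitesimal fraction of these $\theta$'s.

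The main obstacle is the bound $\text{st}(|G_n|/N) > 0$: the classical Stewart--Tijdeman trick, which picks the largest global minimizer of $g$ on $[0,N]$, yields only a single distinguished shift, so a \emph{windowed} variant of the minimizer argument is required to manufacture a positive-density set of good shifts $G_n$. Once this bound is secured, the K\"onig assembly of the branch and the verification of both conclusions of the lemma are routine.
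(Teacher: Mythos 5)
Your overall architecture is right and closely tracks the paper's: reduce to the claim that the internal set $G_n=\{\theta\in[1,N]:|C\cap[\theta+1,\theta+j]|\ge\gamma j\ \text{for all}\ 1\le j\le n\}$ (your $G_n$ is exactly the paper's $\Gamma_n$) has non-infinitesimal relative size, pigeonhole over the $\le 2^n$ possible patterns $(C-\theta)\cap[1,n]$ to get one admissible pattern per level, then pass to a coherent limit. Your K\"onig tree assembly is a harmless cosmetic variant of the paper's ultrafilter construction $n\in E \Leftrightarrow \{k\ge n: n\in B_k\}\in\U$, and your verification of $\sigma(E)\ge\gamma$ and $\Theta_n=T_{P_n}$ is correct.

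However, you explicitly leave the core estimate $\text{st}(|G_n|/N)>0$ unproved, and the sketch you offer for it does not obviously close the gap. ``Running minima of $g$ over length-$n$ windows'' will not directly land in $G_n$: if $\theta^\ast$ minimizes $g$ on a window $[\theta,\theta+n]$, you only know $g(\theta^\ast)\le g(\theta^\ast+j)$ for $j\le \theta+n-\theta^\ast$, which is less than $n$ unless $\theta^\ast$ happens to be the left endpoint; so window minimizers are not in general members of $G_n$. The paper's actual argument is a greedy partition of $[1,N]$: walk $\theta_0=1$, $\theta_{m+1}=\theta_m+F(\theta_m)$, where $F(\theta)=1$ if $\theta\in\Gamma_n$ and otherwise $F(\theta)$ is the first $s\le n$ with $|C\cap[\theta+1,\theta+s]|/s\le\gamma_n$ (a rational $\gamma_n<\gamma$). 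Over singleton good blocks $C$ contributes at most $1$, over a bad block of length $s$ it contributes at most $\gamma_n s$, and summing against $|C|/N\approx\gamma>\gamma_n$ forces the number $M_n$ of good blocks to satisfy $\text{st}(M_n/N)\ge(\gamma-\gamma_n)/(1-\gamma_n)>0$, whence $|\Gamma_n|\ge M_n$ is non-infinitesimal. This counting over a variable-step partition, not a windowed-minimizer argument, is the substantive content of the lemma; as written, your proposal names the right target but does not supply the argument that reaches it.
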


\begin{proof}
For every $n\in\N$, let
$$\Gamma_n\ =\ \left\{\theta\in[1,N]\,\Big|\,
\min_{1\le i\le n}\frac{|C\cap[\theta+1,\theta+i]|}{i}\ge\gamma\right\},$$
and let $\Lambda_n=[1,N]\setminus \Gamma_n$ be its complement. Notice that
$$\Lambda_n\ =\ \left\{\theta\in[1,N]\,\Big|\,
\min_{1\le i\le n}\frac{|C\cap[\theta+1,\theta+i]|}{i}\le\gamma_n\right\}\,,$$
where $\gamma_n<\gamma$ is the rational number
$\gamma_n=\max\{\frac{j}{i}<\gamma\mid 1\le i\le n,\ 0\le j\le i\}$.
Define the internal map $F$ on $[1,N]$ by putting:
$$F(\theta)\ =\
\begin{cases}
1 & \text{if}\ \theta\in\Gamma_n
\\
s & \text{if } \theta\in\Lambda_n\ \text{and }
s=\min\left\{1\le i\le n\,\big|\,\frac{|C\cap[\theta+1,\theta+i]|}{i}\le\gamma_n\right\}.
\end{cases}$$

By internal induction, define a hyperfinite sequence
by putting $\theta_0=1$, and $\theta_{m+1}=\theta_m+F(\theta_m)$
as long as $\theta_{l+1}\le N+1$. Notice that, since $F(\theta)\le n$ for all $\theta$,
the set $[1,N]\setminus[\theta_0,\theta_{l+1})$ contains
less than $n$-many elements. Then we have:

\begin{eqnarray}
\nonumber
|C| & < &
|C\cap[\theta_0,\theta_{l+1})|+n\ =\
\sum_{i=0}^l|C\cap[\theta_i,\theta_{i+1})|+n
\\
\nonumber
{}  & = &
\sum_{\substack{0\le i\le l \\ \theta_i\in\Gamma_n}}|C\cap[\theta_i,\theta_{i+1})|\ +\
\sum_{\substack{0\le i\le l \\ \theta_i\in\Lambda_n}}|C\cap[\theta_i,\theta_{i+1})|\ + n
\\
\nonumber
{} & \le &
\sum_{\substack{0\le i\le l \\ \theta_i\in\Gamma_n}}|C\cap\{\theta_i\}|\ +\
\sum_{\substack{0\le i\le l \\ \theta_i\in\Lambda_n}}|C\cap[\theta_i,\theta_{i+1})|\ + n.
\end{eqnarray}

Now, let $X=\{\theta_i\mid i=0,\ldots,l\}$.
In the last line above, the first term equals
$|C\cap X\cap\Gamma_n|\le |X\cap\Gamma_n|$, and the second term:
\begin{eqnarray}
\nonumber
\sum_{\substack{0\le i\le l \\ \theta_i\in\Lambda_n}}|C\cap[\theta_i,\theta_{i+1})| & \le &
\sum_{\substack{0\le i\le l \\ \theta_i\in\Lambda_n}}F(\theta_i)\cdot\gamma_n
\\
\nonumber
{}  & = &
\gamma_n\cdot\left(\sum_{0\le i\le l}F(\theta_i) -
\sum_{\substack{0\le i\le l \\ \theta_i\in\Gamma_n}} 1\right)
\\
\nonumber
{} & = &
\gamma_n\cdot\left(\theta_{l+1}-1-|X\cap\Gamma_n|\right)\ \le\
\gamma_n\cdot(N-|X\cap\Gamma_n|).
\end{eqnarray}

So, we have the inequality $|C|<M_n+\gamma_n(N-M_n)+n$ where
$M_n=|X\cap\Gamma_n|$, and we obtain that:
$$\frac{|\Gamma_n|}{N}\ \ge\ \frac{M_n}{N}\ >\
\frac{|C|/N - \gamma_n - n/N}{1-\gamma_n}.$$

Notice that the last quantity has a positive standard part.
As there are $2^n$-many subsets of $[1,n]$, by the
\emph{pigeonhole principle} there exists a subset $\Gamma'_n\subseteq\Gamma_n$
with $|\Gamma'_n|\ge|\Gamma_n|/{2^n}$ and a set $B_n\subseteq[1,n]$
with the property that $(C-\theta)\cap[1,n]=B_n$
for all $\theta\in\Gamma'_n$.

\smallskip
Now fix a non-principal
ultrafilter $\U$ on $\N$, and define the set $E\subseteq\N$ by putting
$$n\in E\ \Leftrightarrow\ \B_n=\{k\ge n\mid n\in B_k\}\in\U.$$

We claim that $E$ is the desired set.
Given $n$, the following set belongs to $\U$, because it is
a finite intersection of elements of $\U$:
$$\bigcap_{i\in E\cap[1,n]}\!\!\!\!\!\B_i\ \ \ \cap\
\bigcap_{i\in[1,n]\setminus E}\!\!\!\!\!\B_i^c\ \in\ \U.$$
(Notice that, since $\gamma>0$, we have $1\in B_k$ for all $k$,
and so $1\in E\cap[1,n]\ne\emptyset$.)
If $k$ is any number in the above intersection,
then $B_k\cap[1,n]=E\cap[1,n]$. Moreover, for every $\theta\in\Gamma'_k$,
$$\frac{|E\cap[1,n]|}{n}\ =\ \frac{|B_k\cap[1,n]|}{n}\ \ge\
\min_{1\le i\le k}\frac{|B_k\cap[1,i]|}{i}\ =\
\min_{1\le i\le k}\frac{|C\cap[\theta+1,\theta+i]|}{i}\ \ge\ \gamma.$$
This proves that $\sigma(E)\ge\gamma$. Moreover,
$\theta\in\Gamma'_k\Rightarrow (C-\theta)\cap[1,k]=B_k\Rightarrow
(C-\theta)\cap[1,n]=E\cap[1,n]$, and hence $\theta\in\Theta_n$.
Therefore we conclude that
$$\frac{|\Theta_n|}{N}\ \ge\ \frac{|\Gamma'_k|}{N}\ >\
\frac{|C|/N - \gamma_k - k/N}{2^k(1-\gamma_k)}\,,$$
where the standard part of the last quantity is $\frac{\gamma-\gamma_k}{2^k(1-\gamma_k)}>0$.
\end{proof}

\medskip
In consequence of the previous \emph{nonstandard} lemma,
we obtain an embeddability property that
holds for all sets of positive density.
It is a small refinement of a result by V.~Bergelson \cite{be1}, which
improved on a previous result by C.L.~Stewart and R.~Tijdeman \cite{st2}.\footnote{
~The improvement here is that we have $\sigma(E)\ge\alpha$
instead of $\overline{d}(E)\ge\alpha$.}

\medskip
\begin{theorem}[\emph{Cf.} \cite{be1} Theorem 2.2; \cite{st2} Theorem 1]\label{theoremtwo}
Let $\text{BD}(A)=\alpha>0$. Then there exists
a set of natural numbers $E\subseteq\N$ such that:

\begin{enumerate}
\item
$\sigma(E)\ge\alpha$.

\smallskip
\item
$E\lhd_d A$, and hence $\Delta(E)\subseteq\Delta_0(A)$
and $\Delta_\epsilon(E)\subseteq\Delta_\epsilon(A)$ for all $\epsilon\ge 0$.

\end{enumerate}
\end{theorem}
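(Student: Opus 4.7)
The plan is to derive Theorem \ref{theoremtwo} as a direct consequence of Lemma \ref{sigmalemma} applied to a suitable internal subset coming from the positive Banach density hypothesis on $A$.

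First, I would use the nonstandard characterization of upper Banach density: since $\text{BD}(A)=\alpha>0$, there exist an infinite $N\in\hN$ and an $\Omega\in\hZ$ such that $|\hA\cap[\Omega+1,\Omega+N]|/N\approx\alpha$. Translating by $-\Omega$, the internal set $C=(\hA-\Omega)\cap[1,N]$ is a subset of $[1,N]$ with $\mathrm{st}(|C|/N)=\alpha$. Apply Lemma \ref{sigmalemma} with $\gamma=\alpha$ to obtain a set $E\subseteq\N$ with Schnirelmann density $\sigma(E)\ge\alpha$ (which gives item (1) immediately) and such that for every $n\in\N$ the internal set
\[
\Theta_n\ =\ \{\theta\in[1,N]\mid (C-\theta)\cap[1,n]=E\cap[1,n]\}
\]
satisfies $\mathrm{st}(|\Theta_n|/N)>0$.

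The core step is to show $E\lhd_d A$. Given a finite $F\subseteq E$, let $n=\max F$, so that $F\subseteq E\cap[1,n]$. For each $\theta\in\Theta_n$, we have $F\subseteq(C-\theta)\cap[1,n]$, which means $\theta+F\subseteq C\subseteq\hA-\Omega$, equivalently $(\theta+\Omega)+F\subseteq\hA$. Setting $T=\{t\in\Z\mid t+F\subseteq A\}$, transfer gives $\Theta_n+\Omega\subseteq{}^*T\cap[\Omega+1,\Omega+N]$, and therefore
\[
\mathrm{st}\!\left(\frac{|{}^*T\cap[\Omega+1,\Omega+N]|}{N}\right)\ \ge\ \mathrm{st}\!\left(\frac{|\Theta_n|}{N}\right)\ >\ 0.
\]
By the nonstandard characterization of $\mathrm{BD}$, this gives $\mathrm{BD}(T)>0$, i.e.\ $E\lhd_d A$.

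Finally, the inclusions $\Delta(E)\subseteq\Delta_0(A)$ and $\Delta_\epsilon(E)\subseteq\Delta_\epsilon(A)$ for every $\epsilon\ge 0$ follow at once from items (5) and (10) of the second proposition in Section \ref{sec-embeddability} (using that $\lhd_d$ implies $\lhd$). The main ``obstacle'' is essentially already overcome in Lemma \ref{sigmalemma}; the remaining work is just the bookkeeping of translating between the internal picture inside $[1,N]$ and the standard statement about $A$ via the shift by $\Omega$, which is routine.
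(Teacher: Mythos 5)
Your proof is correct and follows essentially the same route as the paper: invoke the nonstandard characterization of $\text{BD}(A)=\alpha$ to obtain the internal set $C=(\hA-\Omega)\cap[1,N]$, apply Lemma~\ref{sigmalemma} to get $E$ and the sets $\Theta_n$, and then observe that $\Omega+\Theta_n\subseteq\bigcap_{e\in F}{}^*(A-e)\cap[\Omega+1,\Omega+N]$ to conclude $E\lhd_d A$. The only difference is cosmetic (you package the intersection as ${}^*T$), and your appeal to the embeddability propositions for the ``hence'' clauses matches what the paper leaves implicit.
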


\begin{proof}
Pick an infinite interval $[\Omega+1,\Omega+N]$ such that
${|\hA\cap[\Omega+1,\Omega+N]|}/N\approx\alpha$. By applying the
above theorem where $C=(\hA-\Omega)\cap[1,N]$, one gets
the existence of a set $E\subseteq\N$ such that
$\sigma(E)\ge\alpha$ and $\text{st}({|\Theta_n|}/N)>0$ for all $n$, where
$$\Theta_n\ =\ \left\{\theta\in[1,N]\mid (\hA-\Omega-\theta)\cap[1,n]=E\cap[1,n]\right\}.$$

Now, given a finite $F\subseteq E\cap[1,n]$ and given
an element $e\in F$, for every $\theta\in\Theta_n$ we have
$\Omega+\theta+e\in\hA$. This shows that
$\Omega+\Theta_n\subseteq\bigcap_{e\in F}{}^*(A-e)\cap[\Omega+1,\Omega+N]$.
But then
\begin{eqnarray}
\nonumber
\text{BD}\left(\bigcap_{e\in F}(A-e)\right) & \ge &
\text{st}\left(\frac{|\bigcap_{e\in F}{}^*(A-e)\cap[\Omega+1,\Omega+N]|}{N}\right)
\\
\nonumber
{} & \ge &
\text{st}\left(\frac{|\Omega+\Theta_n|}{N}\right)\ =\
\text{st}\left(\frac{|\Theta_n|}{N}\right)\ >\ 0.
\end{eqnarray}
\end{proof}

\bigskip
\section{Difference sets $A-B$}

In this final section we generalize the results of Section \ref{sec-densitydelta}
by considering sets of differences $A-B$ where $A\ne B$.
Remark that while $\Delta(A)=A-A$ is syndetic whenever $A$ has a positive upper Banach density,
the same property does not extend to the case of difference sets $A-B$
where $A\ne B$. (\emph{E.g.}, it is not hard
to construct thick sets $A,B,C$ such that
their complements $A^c,B^c,C^c$ are thick as well, and $A-B\subset C$.)

\smallskip
We shall use the following elementary inequality.

\medskip
\begin{lemma}\label{lemmatwo}
Let $C\subseteq[1,N]$ and $D\subseteq[1,\nu]$ be sets of natural numbers.
Then there exists $1\le\overline{x}\le N$ such that
$$\frac{|(C-\overline{x})\cap D|}{\nu}\ \ge\
\frac{|C|}{N}\cdot\frac{|D|}{\nu}\ -\ \frac{|D|}{N}.$$
\end{lemma}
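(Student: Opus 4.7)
The plan is a standard double-counting (first-moment) argument: I will sum $|(C-x)\cap D|$ as $x$ runs over $[1,N]$ and show that the average is at least the right-hand side of the claimed inequality; a pigeonhole step then supplies the required $\overline{x}$.

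First I would swap the order of summation:
$$
\sum_{x=1}^{N}|(C-x)\cap D|
\ =\ \sum_{x=1}^{N}\sum_{d\in D}\mathbf{1}_{C}(d+x)
\ =\ \sum_{d\in D}|C\cap[d+1,d+N]|.
$$
Because $C\subseteq[1,N]$ and $d+N\ge N$, we have $C\cap[d+1,d+N]=C\cap[d+1,N]$, so $|C\cap[d+1,d+N]|=|C|-|C\cap[1,d]|\ge|C|-d$. Since every $d\in D$ satisfies $d\le\nu$, this yields
$$
\sum_{x=1}^{N}|(C-x)\cap D|\ \ge\ \sum_{d\in D}(|C|-d)\ \ge\ |D|\,(|C|-\nu).
$$

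Next, by the pigeonhole principle there exists $\overline{x}\in[1,N]$ with
$$
|(C-\overline{x})\cap D|\ \ge\ \frac{1}{N}\sum_{x=1}^{N}|(C-x)\cap D|\ \ge\ \frac{|D|\,(|C|-\nu)}{N}.
$$
Dividing by $\nu$ gives the desired
$$
\frac{|(C-\overline{x})\cap D|}{\nu}\ \ge\ \frac{|C|}{N}\cdot\frac{|D|}{\nu}\,-\,\frac{|D|}{N}.
$$

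There is no real obstacle here: the only subtlety is making sure the direction of the averaging is right. One sums over shifts $x$ of $C$ (not of $D$), fixes $d\in D$, and uses $C\subseteq[1,N]$ to translate $|C\cap[d+1,d+N]|$ into $|C|$ minus a small loss of size at most $d\le\nu$. The rest is pigeonhole.
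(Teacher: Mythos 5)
Your proof is correct and is essentially the same double-counting/averaging argument as the paper's: sum $|(C-x)\cap D|$ over $x\in[1,N]$, swap the order of summation to get $\sum_{d\in D}|C\cap[d+1,d+N]|$, use $C\subseteq[1,N]$ and $d\le\nu$ to bound the loss by $|D|\nu$, and finish with pigeonhole. The paper phrases the computation in terms of the normalized quantities $\frac{1}{N}\sum_x\chi_C(x+d)=\frac{|C|}{N}+\frac{e(d)}{N}$ with $|e(d)|\le d$, but this is only a cosmetic difference from your integer-count version.
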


\begin{proof}
Let $\chi_C:[1,N]\to\{0,1\}$ denote the characteristic function of $C$.
For every $d\in D$ we have
$$\frac{1}{N}\cdot\sum_{x=1}^{N}\chi_C(x+d)\ =\
\frac{|C\cap[1+d,N+d]|}{N}\ =\
\frac{|C|}{N}+\frac{e(d)}{N}$$
where $|e(d)|\le d$. Then:

$$\frac{1}{N}\cdot\sum_{x=1}^N\left(\frac{1}{\nu}\cdot\sum_{d\in D}\chi_C(x+d)\right) \ = \
\frac{1}{\nu}\cdot\sum_{d\in D}\left(\frac{1}{N}\cdot\sum_{x=1}^N\chi_C(x+d)\right)\ =$$
$$=\ \frac{1}{\nu}\cdot\sum_{d\in D}\frac{|C|}{N}\ +\
\frac{1}{N\cdot\nu}\cdot\sum_{d\in D}e(d)\ =\ \frac{|C|}{N}\cdot\frac{|D|}{\nu}\ +\ e$$
where
$$|e|\ =\ \left|\frac{1}{N\cdot\nu}\sum_{d\in D}e(d)\right|\ \le\
\frac{1}{N\cdot\nu}\sum_{d\in D}|e(d)|\ \le\ \frac{1}{N\cdot\nu}\cdot\sum_{d\in D}d\ \le\
\frac{1}{N\cdot\nu}\sum_{d\in D}\nu\ =\ \frac{|D|}{N}.$$

By the \emph{pigeonhole principle}, there must exists at least one number $1\le \overline{x}\le N$ such that
$$\frac{1}{\nu}\cdot\sum_{d\in D}\chi_C(\overline{x}+d)\ \ge\
\frac{|C|}{N}\cdot\frac{|D|}{\nu}\ -\ \frac{|D|}{N}.$$
The thesis is reached by noticing that
$$\frac{1}{\nu}\cdot\sum_{d\in D}\chi_C(\overline{x}+d)\ =\
\frac{|(D+\overline{x})\cap C|}{\nu}\ =\ \frac{|(C-\overline{x})\cap D|}{\nu}.$$
\end{proof}

\medskip
We are now ready to prove the main result of this paper.

\medskip
\begin{theorem}\label{theoremthree}
Let $\text{BD}(A)=\alpha>0$ and $\text{BD}(B)=\beta>0$.
Then there exists a set of natural numbers $E\subseteq\N$ such that:

\begin{enumerate}
\item
The Schnirelmann density $\sigma(E)\ge\alpha\beta$.

\smallskip
\item
For every finite $F\subset E$ there exists $\epsilon>0$
such that for arbitrarily large intervals $J$
one finds a suitable shift $A_J=A-t_J$ with the property that
$$\frac{|\left(\bigcap_{e\in F}(A_J\cap B)-e\right)\cap J|}{|J|}\ \ge\ \epsilon.$$

\smallskip
\item
Both $E\lhd_d A$ and $E\lhd_d B$, and hence:

\smallskip
\begin{itemize}
\item
$\Delta(E)\subseteq\Delta_0(A)\cap\Delta_0(B)$;

\smallskip
\item
$\Delta_\epsilon(E)\subseteq\Delta_\epsilon(A)\cap\Delta_\epsilon(B)$ for all $\epsilon\ge 0$;

\smallskip
\item
$\Delta(E)\lhd_d A-B$.
\end{itemize}

\end{enumerate}
\end{theorem}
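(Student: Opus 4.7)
The plan is to combine the two main tools developed so far, Lemma \ref{sigmalemma} and Lemma \ref{lemmatwo}, so as to produce a single set $E \subseteq \N$ that embeds densely into both $A$ and $B$ simultaneously, with Schnirelmann density at least $\alpha\beta$. First I fix an infinite interval $[\Omega+1, \Omega+N]$ witnessing $\text{BD}(A) = \alpha$, so that $C := (\hA - \Omega) \cap [1, N]$ satisfies $\text{st}(|C|/N) = \alpha$. Next I choose an infinite hypernatural $\nu$ with $\nu/N \approx 0$ (for example $\nu = \lfloor \sqrt{N} \rfloor$), together with an interval $[y+1, y+\nu]$ witnessing $\text{BD}(B) = \beta$; setting $D := (\hB - y) \cap [1, \nu]$ gives $\text{st}(|D|/\nu) = \beta$.

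I then invoke Lemma \ref{lemmatwo} (in its transferred, hyperfinite form) applied to $C$ and $D$. Since $|D|/N \le \nu/N \approx 0$, the lemma produces some $\overline{x} \in [1, N]$ with
\[
\text{st}\!\left(\frac{|(C - \overline{x}) \cap D|}{\nu}\right)\ \ge\ \alpha\beta.
\]
Defining $E' := (C - \overline{x}) \cap D \subseteq [1, \nu]$ and applying Lemma \ref{sigmalemma} to $E'$ inside $[1, \nu]$ then produces the desired set $E \subseteq \N$ with $\sigma(E) \ge \alpha\beta$ (which is item (1)), together with internal sets $\Theta_n \subseteq [1, \nu]$ of positive relative density in which $(E' - \theta) \cap [1, n] = E \cap [1, n]$ for every $\theta \in \Theta_n$.

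The decisive point is that each $\theta \in \Theta_n$ yields a \emph{simultaneous} shift. For any finite $F \subseteq E \cap [1, n]$, the inclusion $\theta + F \subseteq E'$ splits into $\Omega + \overline{x} + \theta + F \subseteq \hA$ (from $E' \subseteq C - \overline{x}$) and $y + \theta + F \subseteq \hB$ (from $E' \subseteq D$). The positive relative density of $\Omega + \overline{x} + \Theta_n$ gives $E \lhd_d A$ via the nonstandard characterization of Banach density, and the positive relative density of $y + \Theta_n$ gives $E \lhd_d B$. The three bullets of item (3) are then immediate from the preservation properties of $\lhd_d$ established in the second Proposition of Section \ref{sec-embeddability}: the two $\Delta$-inclusions come from its items (5) and (10), while $\Delta(E) \lhd_d A - B$ follows from item (7) applied with $X = X' = E$, since $E \lhd_d A$ combined with the weaker $E \lhd B$ gives $E - E \lhd_d A - B$. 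For item (2) I set $t := \Omega + \overline{x} - y \in \hZ$ and observe that, inside the infinite interval $J_0 := [y+1, y+\nu]$, the two simultaneous shifts above combine into
\[
y + \Theta_n\ \subseteq\ \bigcap_{e \in F}\!\bigl(((\hA - t) \cap \hB) - e\bigr),
\]
with relative density at least $\text{st}(|\Theta_n|/\nu) > 0$; transferring this produces arbitrarily large standard intervals $J$ with standard shifts $t_J$ witnessing any $\epsilon$ strictly below $\text{st}(|\Theta_n|/\nu)$.

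The only real obstacle is the second step, where one must ensure that the error term $|D|/N$ in Lemma \ref{lemmatwo} is genuinely negligible; this is what forces the scale separation $\nu/N \approx 0$ and is precisely the mechanism by which the product density $\alpha\beta$ arrives in $E'$. After that, the bilateral character of $E$ is automatic: $E'$ sits simultaneously inside a translate of $C$ and inside $D$, and Lemma \ref{sigmalemma} preserves this bilateral matching into arbitrarily long initial segments of $E$.
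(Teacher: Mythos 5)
Your proposal is correct and follows essentially the same route as the paper: pick separated scales $\nu/N \approx 0$, transfer Lemma \ref{lemmatwo} to align a translate of the $A$-window with the $B$-window to get an internal set $W = E'$ of relative density $\approx \alpha\beta$ inside $[1,\nu]$, apply Lemma \ref{sigmalemma} to $W$ to obtain $E$ and the internal sets $\Theta_n$, and then read off the two dense embeddings and item~(2) by shifting $\Theta_n$ by the appropriate (possibly infinite) translates and transferring. The only cosmetic difference is notation and the order in which items (2) and (3) are extracted from the $(\star)$-type inequality; you also correctly identify that the step the paper attributes to Theorem \ref{theoremtwo} is really an application of Lemma \ref{sigmalemma}.
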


\begin{proof}
$(1)$. Fix $\nu,N\in\hN$ infinite numbers with $\nu/N\approx 0$, and pick
$[\Omega+1,\Omega+N]$ and $[\Xi+1,\Xi+\nu]$ intervals of length $N$ and $\nu$ respectively,
such that
$$\frac{|\hA\cap[\Omega+1,\Omega+N]}{N}\,\approx\,\alpha\quad\text{and}\quad
\frac{|\hB\cap[\Xi+1,\Xi+\nu]|}{\nu}\,\approx\,\beta.$$

Consider the internal sets
\begin{itemize}
\item
$C=(\hA-\Omega)\cap[1,N]$\,;

\smallskip
\item
$D=(\hB-\Xi)\cap[1,\nu]$.
\end{itemize}

\smallskip
Clearly $|C|/N\approx\alpha$
and $|D|/\nu\approx\beta$.
The property of Lemma \ref{lemmatwo} \emph{transfers} to the
internal sets $C\subseteq[1,N]$ and $D\subseteq[1,\nu]$, and so
we can pick a hyperinteger element $1\le\zeta\le N$ such that
$$\frac{|(C-\zeta)\cap D|}{\nu}\ \ge\
\frac{|C|}{N}\cdot\frac{|D|}{\nu}\ -\ \frac{|D|}{N}.$$

Now let $W=(C-\zeta)\cap D\subseteq[1,\nu]$.
Since $|D|/N\le\nu/N\approx 0$, we have that
$$\gamma\ =\ \text{st}\left(\frac{|W|}{\nu}\right)\ \ge\
\text{st}\left(\frac{|C|}{N}\cdot\frac{|D|}{\nu}\right)\ =\
\text{st}\left(\frac{|C|}{N}\right)\cdot\text{st}\left(\frac{|D|}{\nu}\right)\ =\
\alpha\cdot\beta.$$

By applying Theorem \ref{theoremtwo} to the internal set $W\subseteq[1,\nu]$,
one gets the existence of a set $E\subseteq\N$ satisfying the
following properties:

\smallskip
\begin{itemize}
\item
$\sigma(E)\ge\gamma\ge\alpha\beta$.

\smallskip
\item
For every $n$, the internal set
$\Theta_n=\{\theta\in[1,\nu]\mid (W-\theta)\cap[1,n]=E\cap[1,n]\}$
is such that $\text{st}({|\Theta_n|}/{\nu})>0$.
\end{itemize}

$(2)$.
Given a finite set $F=\{e_1<\ldots<e_k\}\subseteq E\cap[1,n]$,
for every $\theta\in\Theta_n$ and for every $i$ we have that
$\theta+e_i\in W=(\hA-\Omega-\zeta)\cap(\hB-\Xi)\cap[1,\nu]$, and so
$$\Xi+\Theta_n\ \subseteq\ \bigcap_{i=1}^k[\left((\hA-\mu)\cap \hB\right)-e_i]\cap I$$
where $\mu=\Omega+\zeta-\Xi$ and $I=[\Xi+1,\Xi+\nu]$. Then
$$\hspace{-2cm}(\star)\quad\quad
\text{st}\left(\frac{|\bigcap_{i=1}^k[\left((\hA-\mu)\cap \hB\right)-e_i]\cap I|}
{|I|}\right)\ \ge\ \text{st}({|\Theta_n|}/{\nu})\ =\ \epsilon\ >\ 0.$$

We now want to extract a standard property out of the above
nonstandard inequality $(\star)$.
Notice that, since $|I|=\nu$ is infinite,
the following is true for every fixed $m\in\N$:
$$\exists I\subset\hZ\ \text{interval s.t. } |I|>m\ \&\
\exists \mu\in\hZ\ \ \text{s.t.}\ \frac{|\bigcap_{i=1}^k
[\left((\hA-\mu)\cap \hB\right)-e_i]\cap I|}{|I|}\ \ge\ \epsilon.$$
By \emph{transfer}, we obtain the existence of
an interval $J\subset\Z$ of length $|J|>m$ and of an element $t_J\in\Z$
with the desired property that:
$$\frac{|\bigcap_{i=1}^k[\left((A-t_J)\cap B\right)-e_i]\cap J|}{|J|}\ \ge\ \epsilon.$$

\smallskip
$(3)$. With the same notation as above, by $(\star)$ one directly gets that
$$\text{st}\left(\frac{|\bigcap_{i=1}^k(\hA-e_i)\cap I'|}
{|I'|}\right)\ \ge\ \epsilon\quad\text{and}\quad
\text{st}\left(\frac{|\bigcap_{i=1}^k(\hB-e_i)\cap I|}
{|I|}\right)\ \ge \epsilon,$$
where $I'=\mu+I=[\Omega+\zeta+1,\Omega+\zeta+\nu]$.
Since ${}^*\!\left(\bigcap_{i=1}^k(A-e_i)\right)=\bigcap_{i=1}^k(\hA-e_i)$,
by the nonstandard characterization of upper Banach density, we obtain
the thesis $\text{BD}\left(\bigcap_{e\in F}(A-e)\right)>0$.
The other inequality $\text{BD}\left(\bigcap_{e\in F}(B-e)\right)>0$
is proved in the same way.
\end{proof}

\medskip
By a recent result obtained by M.~Beiglb\"ock, V.~Bergelson and A.~Fish
in the general context of countable amenable groups (\emph{cf.} \cite{bbf} Proposition 4.1.)
one gets the existence of a set $E$ of positive upper Banach density
with the property that $\Delta(E)\lhd A-B$. Afterwards,
M.~Beiglb\"ock found a short ultrafilter proof of that property,
with the refinement that one can take $\text{BD}(E)\ge\alpha\beta$.
Our improvement here is that one can also assume the Schnirelmann
density $\sigma(E)\ge\alpha\beta$, and that there are
\emph{dense} embeddings $E\lhd_d A$ and $E\lhd_d B$
(and hence, a dense embedding $\Delta(E)\lhd_d A-B$).

\smallskip
As a first corollary to our main theorem, we obtain a sharpening
of a result by I.Z.~Rusza \cite{ru2}
about intersections of difference sets, which improved
on a previous result by C.L.~Stewart and R.~Tijedman \cite{st1}.

\medskip
\begin{corollary}[\emph{cf.} \cite{ru2} Theorem 1; \cite{st2} Theorem 4]
Assume that $A_1,\ldots,A_n\subseteq\Z$ have
positive upper Banach densities $\text{BD}(A_i)=\alpha_i$.
Then there exists a set $E\subseteq\N$ with
$\sigma(E)\ge\prod_{i=1}^n\alpha_i$ and such that
$\Delta_\epsilon(E)\subseteq\bigcap_{i=1}^n\Delta_\epsilon(A_i)$
for every $\epsilon\ge 0$.
\end{corollary}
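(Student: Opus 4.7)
The plan is to proceed by induction on $n$, using Theorem III as the engine. To make the induction go through smoothly, I will strengthen the inductive statement to record not just the containment of density-Delta sets, but the dense embeddings themselves: I claim by induction on $n$ that there exists $E_n \subseteq \N$ with $\sigma(E_n) \ge \prod_{i=1}^n \alpha_i$ and $E_n \lhd_d A_i$ for every $i=1,\ldots,n$. The desired inclusion $\Delta_\epsilon(E_n) \subseteq \bigcap_{i=1}^n \Delta_\epsilon(A_i)$ will then be automatic, since $E_n \lhd_d A_i$ implies in particular $E_n \lhd A_i$, and property $(10)$ of the second Proposition in Section \ref{sec-embeddability} gives $\Delta_\epsilon(E_n) \subseteq \Delta_\epsilon(A_i)$ for every $\epsilon \ge 0$.

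For the base case $n=1$, Theorem II directly produces a set $E_1 \subseteq \N$ with $\sigma(E_1) \ge \alpha_1$ and $E_1 \lhd_d A_1$. For the inductive step, assume the claim holds for $n-1$, producing $E_{n-1}$ with $\sigma(E_{n-1}) \ge \prod_{i=1}^{n-1}\alpha_i > 0$ and $E_{n-1} \lhd_d A_i$ for $i \le n-1$. Since $\text{BD}(E_{n-1}) \ge \sigma(E_{n-1}) > 0$ and $\text{BD}(A_n) = \alpha_n > 0$, Theorem III applied to the pair $E_{n-1}, A_n$ yields a set $E_n \subseteq \N$ with
\[
\sigma(E_n) \ge \text{BD}(E_{n-1}) \cdot \alpha_n \ge \sigma(E_{n-1}) \cdot \alpha_n \ge \prod_{i=1}^{n} \alpha_i,
\]
together with $E_n \lhd_d E_{n-1}$ and $E_n \lhd_d A_n$. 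To finish the inductive step I only need $E_n \lhd_d A_i$ for $i \le n-1$: this follows from the transitivity property that $X \lhd Y$ and $Y \lhd_d Z$ imply $X \lhd_d Z$ (item $(2)$ of the second Proposition in Section \ref{sec-embeddability}), applied with $X=E_n$, $Y = E_{n-1}$, $Z = A_i$.

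There is really no serious obstacle in this argument; the only point that requires care is keeping the inductive hypothesis in the stronger form (dense embeddings of $E$ into \emph{each} $A_i$, not just inclusion of $\Delta_\epsilon$-sets), since without this one cannot iteratively feed the output back into Theorem III and transitively propagate the embedding across all previously handled $A_i$'s. The inequality $\sigma \le \text{BD}$ from the preliminaries is what lets one replace $\text{BD}(E_{n-1})$ by $\sigma(E_{n-1})$ in the density bound, so that the Schnirelmann densities telescope to the product $\prod_{i=1}^n \alpha_i$.
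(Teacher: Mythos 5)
Your proof is correct and follows essentially the same route as the paper: induct on $n$, apply Theorem III to the pair $(E_{n-1},A_n)$, and use $\sigma\le\mathrm{BD}$ to telescope the Schnirelmann densities. The only cosmetic difference is that you carry the stronger invariant $E_n\lhd_d A_i$ for all $i\le n$ (via the transitivity $X\lhd Y$, $Y\lhd_d Z \Rightarrow X\lhd_d Z$), whereas the paper carries only the inclusion $\Delta_\epsilon(E_n)\subseteq\bigcap_{i\le n}\Delta_\epsilon(A_i)$ and propagates it through the new inclusion $\Delta_\epsilon(E_n)\subseteq\Delta_\epsilon(E_{n-1})$, but the substance is the same.
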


\begin{proof}
We proceed by induction on $n$. The basis $n=1$ is given by
Theorem \ref{theoremtwo}.
At step $n+1$, by the inductive hypothesis we can pick
a set $E'\subseteq\N$ such that $\sigma(E')\ge\prod_{i=1}^n\alpha_i$
and $\Delta_\epsilon(E')\subseteq\bigcap_{i=1}^n\Delta_\epsilon(A_i)$.
Now apply the above theorem to the sets $E'$ and $A_{n+1}$, and obtain
the existence of a set $E\subseteq\N$ whose Schnirelmann density
$\sigma(E)\ge\text{BD}(E')\cdot\text{BD}(A_{n+1})\ge\prod_{i=1}^{n+1}\alpha_i$,
and such that
$\Delta_\epsilon(E)\subseteq\Delta_\epsilon(E')\cap\Delta_\epsilon(A_{n+1})\subseteq
\bigcap_{i=1}^{n+1}\Delta_\epsilon(A_i)$, as desired.
\end{proof}

\medskip
Two more corollaries are obtained by combining Theorem \ref{theoremthree}
with Theorem \ref{theoremone}.

\medskip
\begin{corollary}
Assume that $\text{BD}(A)=\alpha>0$ and $\text{BD}(B)=\beta>0$. Then
for every $0\le\epsilon<\alpha^2\beta^2$,
for every infinite $X\subseteq\Z$, and for every $x\in X$,
there exists a finite subset $F\subset X$ such that

\begin{enumerate}
\item
$x\in F$\,;

\smallskip
\item
$|F|\le\lfloor\frac{\alpha\beta-\epsilon}{\alpha^2\beta^2-\epsilon}\rfloor=k$\,;

\smallskip
\item
$X\subseteq (\Delta_\epsilon(A)\cap\Delta_\epsilon(B))+F$.
\end{enumerate}
In consequence, for every $h$ the intersection $\Delta_\epsilon(A)/h\cap\Delta_\epsilon(B)/h$ is
syndetic and its lower Banach density is not smaller than $1/k$.
\end{corollary}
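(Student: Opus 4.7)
The plan is simply to chain Theorem \ref{theoremthree} (for the existence of an auxiliary set $E$) with Theorem \ref{theoremone} (applied to $E$ in the role of $A$). Theorem \ref{theoremthree} produces $E\subseteq\N$ with Schnirelmann density $\sigma(E)\ge\alpha\beta$ and with the inclusion $\Delta_\epsilon(E)\subseteq\Delta_\epsilon(A)\cap\Delta_\epsilon(B)$ for every $\epsilon\ge 0$. Since $\sigma(E)\le\underline{d}(E)\le\text{BD}(E)$, in particular $\text{BD}(E)\ge\alpha\beta$, and so for $\epsilon<\alpha^2\beta^2\le\text{BD}(E)^2$ the hypothesis of Theorem \ref{theoremone} is satisfied by $E$.

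Applying Theorem \ref{theoremone} to the infinite set $X$ and the element $x\in X$ (with $E$ in the role of $A$) yields a finite $F\subset X$ with $x\in F$ and $X\subseteq\Delta_\epsilon(E)+F$. Combined with the Theorem \ref{theoremthree} inclusion, this gives
$$X\ \subseteq\ \Delta_\epsilon(E)+F\ \subseteq\ \bigl(\Delta_\epsilon(A)\cap\Delta_\epsilon(B)\bigr)+F,$$
which is (3). For the cardinality bound, note that the function $\gamma\mapsto(\gamma-\epsilon)/(\gamma^2-\epsilon)$ is decreasing for $\gamma>\sqrt\epsilon$, so
$$|F|\ \le\ \left\lfloor\frac{\text{BD}(E)-\epsilon}{\text{BD}(E)^2-\epsilon}\right\rfloor\ \le\ \left\lfloor\frac{\alpha\beta-\epsilon}{\alpha^2\beta^2-\epsilon}\right\rfloor\ =\ k,$$
which is (2), while (1) is built into Theorem \ref{theoremone}.

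For the syndeticity consequence, I would specialize the statement just proved to $X=h\Z$ (with $x=0$), mimicking the corollary at the end of Section \ref{sec-densitydelta}. The set $F\subset h\Z$ obtained can be written as $F=hF'$ with $|F'|=|F|\le k$, and dividing the inclusion $h\Z\subseteq(\Delta_\epsilon(A)\cap\Delta_\epsilon(B))+hF'$ through by $h$ gives
$$\Z\ \subseteq\ \bigl(\Delta_\epsilon(A)/h\cap\Delta_\epsilon(B)/h\bigr)+F'.$$
An appeal to Proposition \ref{lowerbanach}(1) then yields $\underline{\text{BD}}(\Delta_\epsilon(A)/h\cap\Delta_\epsilon(B)/h)\ge 1/k$, hence syndeticity. (The case $h=0$ is trivial because $0$ belongs to both $\Delta_\epsilon(A)$ and $\Delta_\epsilon(B)$.)

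There is essentially no obstacle: all the combinatorial and nonstandard content has already been packaged into Theorems \ref{theoremthree} and \ref{theoremone}, and the corollary is a direct assembly. The only point requiring a moment's care is the monotonicity argument allowing one to replace the (possibly larger) value $\text{BD}(E)$ by $\alpha\beta$ in the floor bound, which is handled above.
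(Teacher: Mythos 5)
Your proof is correct and follows the same route as the paper: produce $E$ via Theorem~\ref{theoremthree}, feed it into Theorem~\ref{theoremone}, and transfer the conclusion through the inclusion $\Delta_\epsilon(E)\subseteq\Delta_\epsilon(A)\cap\Delta_\epsilon(B)$; the only cosmetic difference is that you invoke Theorem~\ref{theoremone} via $\text{BD}(E)\ge\alpha\beta$, whereas the paper invokes it via $\overline{d}(E)\ge\sigma(E)\ge\alpha\beta$. Your explicit monotonicity observation --- that $\gamma\mapsto(\gamma-\epsilon)/(\gamma^2-\epsilon)$ is decreasing for $\gamma>\sqrt{\epsilon}$, so the floor bound at the actual density of $E$ dominates the stated bound $k$ --- is a genuine clarification of a point the paper leaves tacit.
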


\begin{proof}
Pick a set $E\subseteq\N$ as given by Theorem \ref{theoremthree}.
As $\overline{d}(E)\ge\sigma(E)\ge\alpha\beta$, we can apply Theorem
\ref{theoremone} and obtain the existence of a finite $F\subset X$ such that
$x\in F$, $|F|\le k$,
and $X\subseteq\Delta_\epsilon(E)+F$
(in fact, $X\subseteq\overline{\Delta}_\epsilon(E)+F$).
As $E\lhd A$ and $E\lhd B$ (in fact, $E\lhd_d A$ and $E\lhd_d B$), we have the
inclusion $\Delta_\epsilon(E)\subseteq \Delta_\epsilon(A)\cap\Delta_\epsilon(B)$, and
the thesis follows. Finally, by taking as $X=h\,\Z$ the set of $h$-multiples,
one obtains that $\Z=(\Delta_\epsilon(A)/h\cap\Delta_\epsilon(B)/h)+G$ for
a suitable $|G|\le k$ and so, by Proposition \ref{lowerbanach},
also the last statement in the corollary is proved.
\end{proof}

\medskip
\begin{corollary}
Assume that $\text{BD}(A)=\alpha>0$ and $\text{BD}(B)=\beta>0$.
Then for every infinite $X\subseteq\Z$ and
for every $x\in X$, there exists a finite subset $F\subset X$ such that

\begin{enumerate}
\item
$x\in F$\,;

\smallskip
\item
$|F|\le\lfloor\frac{1}{\alpha\beta}\rfloor$\,;

\item
$X\lhd_d (A-B)+F$.
\end{enumerate}
\end{corollary}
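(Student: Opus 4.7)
The plan is to reduce this corollary to the two main theorems already at hand. By Theorem~\ref{theoremthree} applied to $A,B$, we obtain an auxiliary set $E\subseteq\N$ with $\sigma(E)\ge\alpha\beta$ and, crucially, $\Delta(E)\lhd_d A-B$. Since $\sigma(E)\le\underline{d}(E)\le\overline{d}(E)$, the set $E$ has upper asymptotic density at least $\alpha\beta$, so Theorem~\ref{theoremone} becomes applicable to $E$.

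Applying Theorem~\ref{theoremone} (in its ``$\overline{d}$'' form) to $E$ with the given infinite set $X$, the distinguished point $x$, and $\epsilon=0$, the cardinality bound becomes $\lfloor(\alpha\beta)/(\alpha\beta)^2\rfloor=\lfloor 1/(\alpha\beta)\rfloor$, and we obtain a finite $F\subset X$ with $x\in F$, $|F|\le\lfloor 1/(\alpha\beta)\rfloor$, and $X\subseteq\overline{\Delta}_0(E)+F\subseteq\Delta(E)+F$. This settles items (1) and (2), and leaves us with the preliminary set-theoretic inclusion $X\subseteq\Delta(E)+F$.

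It remains to upgrade this inclusion to the dense embedding $X\lhd_d (A-B)+F$. Given a finite $G\subseteq X$, for each $g\in G$ we pick some $f_g\in F$ with $g-f_g\in\Delta(E)$, and form the finite set $G'=\{g-f_g\mid g\in G\}\subseteq\Delta(E)$. Since $\Delta(E)\lhd_d A-B$, the translation set $T=\bigcap_{h\in G'}\bigl((A-B)-h\bigr)$ has positive upper Banach density. For each $t\in T$ and each $g\in G$, the identity $t+g=(t+(g-f_g))+f_g$ yields $t+g\in(A-B)+f_g\subseteq(A-B)+F$, whence $T\subseteq\bigcap_{g\in G}\bigl((A-B)+F-g\bigr)$ and the latter set has positive upper Banach density as well. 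This establishes item (3).

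The only real obstacle in this strategy is the final step: the inclusion $X\subseteq\Delta(E)+F$ supplied by Theorem~\ref{theoremone} is purely combinatorial, whereas $\lhd_d$ requires that every finite configuration of $X$ admit densely many \emph{simultaneous} shifted copies inside $(A-B)+F$. The decomposition $g=(g-f_g)+f_g$ reroutes the finite configuration $G\subseteq X$ through a configuration $G'\subseteq\Delta(E)$, where the dense embedding $\Delta(E)\lhd_d A-B$ delivered by Theorem~\ref{theoremthree} can be applied directly.
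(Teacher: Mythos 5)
Your proposal is correct and follows essentially the same route as the paper: apply Theorem~\ref{theoremthree} to produce $E$ with $\sigma(E)\ge\alpha\beta$ and $\Delta(E)\lhd_d A-B$, then apply Theorem~\ref{theoremone} (with $\epsilon=0$) to $E$ to obtain $F$ with the stated size bound and $X\subseteq\Delta(E)+F$, and finally upgrade to $X\lhd_d(A-B)+F$. The only cosmetic difference is that you verify the last step by hand, whereas the paper invokes the general transport properties of $\lhd_d$ from Section~\ref{sec-embeddability} (namely that $\Delta(E)\lhd_d A-B$ gives $\Delta(E)+F\lhd_d(A-B)+F$, and that $X\subseteq Y\lhd_d Z$ implies $X\lhd_d Z$); your explicit computation is exactly what those propositions encapsulate.
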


\begin{proof}
With the same notation as in the proof of the previous corollary,
let $\epsilon=0$ and pick $E\subseteq\N$ with $\sigma(E)\ge\alpha\beta$,
and $|F|\le\lfloor 1/{\alpha\beta}\rfloor$
such that $X\subseteq\Delta_0(E)+F$.
Now, $E\lhd_d A$ and $E\lhd_d B$ imply that
$\Delta(E)\lhd_d A-B$, which in turn implies that $\Delta(E)+F\lhd_d (A-B)+F$.
As $\Delta_0(E)\subseteq\Delta(E)$, we  can conclude that
$X\subseteq \Delta(E)+F\lhd_d (A-B)+F$.
\end{proof}

\medskip
By the above result where $X=\Z$,
one can improve Jin's theorem about the piecewise syndeticity of
a difference set, by giving a precise bound on
the number of shifts of $A-B$ which are
needed to cover a thick set.

\medskip
\begin{corollary}[\emph{cf.} \cite{jin1} Corollary 3]
Assume that $\text{BD}(A)=\alpha>0$ and $\text{BD}(B)=\beta>0$, and
let $k=\lfloor 1/{\alpha\beta}\rfloor$. Then there exists a finite set
$|F|\le k$ such that
$A-B+F$ is thick, and hence $A-B$ is piecewise syndetic.
\end{corollary}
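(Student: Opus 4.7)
The statement is an immediate consequence of the previous corollary once one recognizes that thickness is a very weak property in the finite embeddability hierarchy, and the plan is simply to unwind the previous corollary in the case $X=\Z$.

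My first step is to apply the previous corollary with $X=\Z$ (and $x=0$, say). This is legitimate since $\Z$ is infinite, and it hands me a finite set $F$ satisfying $|F|\le\lfloor 1/(\alpha\beta)\rfloor=k$ and $\Z\lhd_d (A-B)+F$.

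The second step is to pass from the $\lhd_d$ conclusion to thickness of $(A-B)+F$. This uses only the trivial direction $X\lhd_d Y\Rightarrow X\lhd Y$ together with the characterization of thick sets as the $\lhd$-maximal sets (item $(1)$ of the first proposition in Section \ref{sec-embeddability}); alternatively, one can argue directly that $\Z\lhd Y$ implies $Y$ contains a shift of each interval $[1,n]$, hence arbitrarily long intervals, hence $Y$ is thick. Either way, $(A-B)+F$ is thick.

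Finally, the piecewise syndeticity of $A-B$ follows immediately from the characterization recalled in Section \ref{sec-preliminaries}: a set $C$ is piecewise syndetic if and only if $C+F'$ is thick for some finite $F'$. Applied to $C=A-B$ and $F'=F$, this yields the thesis. There is no real obstacle here; the work was entirely performed in the previous corollary (whose content in turn rests on Theorems \ref{theoremone} and \ref{theoremthree}), and this final statement simply specializes $X$ to $\Z$ and translates the resulting embeddability into the familiar language of thickness and piecewise syndeticity.
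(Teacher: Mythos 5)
Your proof is correct and follows exactly the paper's own argument: apply the preceding corollary with $X=\Z$ to obtain $F$ with $|F|\le k$ and $\Z\lhd_d (A-B)+F$, note that $\Z\lhd_d Y$ (or even $\Z\lhd Y$) forces $Y$ to be thick, and then invoke the characterization that a set $C$ is piecewise syndetic precisely when $C+F$ is thick for some finite $F$.
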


\begin{proof}
Apply the above Corollary with $X=\Z$, and recall that
$\Z\lhd_d Y$ if and only if $Y$ is $\lhd_d$-maximal
if and only if $Y$ is thick.
\end{proof}

\medskip
Bohr sets are commonly used in applications of \emph{Fourier analysis}
in combinatorial number theory. Recall that $A\subseteq\Z$
is called a \emph{Bohr set} if it contains a non-empty open set of
the topology induced by the embedding into the \emph{Bohr compactification}
of the discrete topological group $(\Z,+)$.
The following characterization holds:
$A$ is a Bohr set if and only if there exist
$r_1,\ldots,r_k\in[0,1)$ and a positive $\epsilon>0$
such that a shift of $\{x\in\Z\mid \|r_1\cdot x\|,\ldots,\|r_k\cdot x\|<\epsilon\}$
is included in $A$, where we denoted by $\|z\|$ the distance of
$z$ from the nearest integer.
A set is \emph{piecewise Bohr} if it is the intersection of a Bohr set
with a thick set. We remark that Bohr sets are syndetic,
and hence piecewise Bohr sets are piecewise syndetic, but there are
syndetic sets which are not piecewise Bohr.
(For a proof of this fact, and for more information about
Bohr sets, we refer the reader to \cite{bfw} and references therein.)

\smallskip
As a consequence of Theorem \ref{theoremthree},
one can also recover the following theorem by V.~Bergelson, H.~F\"urstenberg
and B.~Weiss' about the Bohr property of difference sets.

\medskip
\begin{corollary}[\emph{cf.} \cite{bfw} Theorem I]
Let $A$ and $B$ have positive upper Banach
density. Then the difference set $A-B$ is piecewise Bohr.
\end{corollary}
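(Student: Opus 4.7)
The plan is to combine Theorem \ref{theoremthree}, which provides a set $E\subseteq\N$ of positive Schnirelmann density with the dense embedding $\Delta(E)\lhd_d A-B$, with F\o lner's classical theorem stating that for any set $E$ of positive upper Banach density, $\Delta(E)=E-E$ contains a Bohr neighborhood of $0$ modulo a set of Banach density zero. The strategy is: (i) locate a piecewise Bohr subset of $\Delta(E)$ via F\o lner; (ii) lift this structure nonstandardly into ${}^*(A-B)$ using Proposition \ref{fe}; and (iii) descend to a standard piecewise Bohr structure inside $A-B$.

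First I would apply Theorem \ref{theoremthree} to obtain $E\subseteq\N$ with $\sigma(E)\ge\alpha\beta>0$ and $\Delta(E)\lhd_d A-B$, which in particular gives $\Delta(E)\lhd A-B$. Since $\sigma(E)>0$ implies $\text{BD}(E)>0$, F\o lner's theorem furnishes a Bohr neighborhood $B^*$ of $0$ and a set $N\subseteq\Z$ with $\text{BD}(N)=0$ such that $B^*\setminus N\subseteq\Delta(E)$. The complement $T=N^c$ is thick: otherwise every interval of length $k$ would meet $N$, forcing $\text{BD}(N)\ge 1/k$, a contradiction. Hence $\Delta(E)\supseteq B^*\cap T$ is already piecewise Bohr.

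Next I would transfer this structure to $A-B$ via the embedding. By Proposition \ref{fe}, there is $\mu\in\hZ$ with $\mu+\Delta(E)\subseteq{}^*(A-B)$, whence $(\mu+{}^*B^*)\cap(\mu+{}^*T)\subseteq{}^*(A-B)$. Because $T$ is thick, by the nonstandard characterization of thickness ${}^*T$ contains an infinite hyperinteger interval $I$, so $\mu+I$ is an infinite interval contained in $\mu+{}^*T$; thus $(\mu+{}^*B^*)\cap(\mu+I)\subseteq{}^*(A-B)$. Transferring the underlying existential statement yields, for every $n\in\N$, a shift $t_n\in\Z$ and an interval $J_n\subseteq\Z$ of length at least $n$ with $(t_n+B^*)\cap J_n\subseteq A-B$.

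Finally, to obtain a genuine standard piecewise Bohr structure I would replace the varying shifts $t_n$ by a single translate. Using compactness of the Bohr compactification $b\Z$, pass to a convergent subsequence $t_{n_k}\to\bar t$ in $b\Z$; choose a standard $t\in\Z$ with $t-\bar t$ in a small Bohr neighborhood of $0$, and shrink to a Bohr neighborhood $B^{**}$ of $0$ whose radius is small enough to ensure $t+B^{**}\subseteq t_{n_k}+B^*$ for all sufficiently large $k$. Setting $T_0=\bigcup_k J_{n_k}$, which is thick since $|J_{n_k}|\to\infty$, we obtain $(t+B^{**})\cap T_0\subseteq A-B$, so $A-B$ is piecewise Bohr. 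The principal obstacle is exactly this final descent: eliminating the hyperinteger shift $\mu$ (or equivalently the varying standard shifts $t_n$) in favor of a single standard translate of a Bohr set requires the Bohr-compactification / ultrafilter-limit machinery, with a careful choice of Bohr radii so that the approximation error between $t$ and $t_{n_k}$ is absorbed into the smaller neighborhood $B^{**}$.
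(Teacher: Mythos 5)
Your proof takes a genuinely different route from the paper's. The paper's proof is a two-line citation: apply Theorem~\ref{theoremthree} to get $E$ with $\sigma(E)\ge\alpha\beta>0$ and $\Delta(E)\lhd_d A-B$, then invoke Proposition~4.1 of \cite{bbf} as a black box for the implication ``$\Delta(E)\lhd A-B$ with $\text{BD}(E)>0$ implies $A-B$ piecewise Bohr.'' You instead unpack that implication: F\o lner's theorem gives $B^*\setminus N\subseteq\Delta(E)$ with $B^*$ a Bohr neighborhood of $0$ and $\text{BD}(N)=0$, so $\Delta(E)$ contains the piecewise Bohr set $B^*\cap T$ with $T=N^c$ thick; then $\Delta(E)\lhd A-B$ transports this structure into $A-B$, and a compactness argument in the Bohr compactification consolidates the variable shifts into one. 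This is a more self-contained and instructive argument, at the cost of having to manage the analytic bookkeeping that \cite{bbf} already did.

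However, the nonstandard transfer step as written is incorrect. From Proposition~\ref{fe} you obtain $\mu+\Delta(E)\subseteq{}^*(A-B)$, where $\mu+\Delta(E)=\{\mu+t\mid t\in\Delta(E)\}$ is the $\mu$-shift of the \emph{standard} set $\Delta(E)$ (an external subset of $\hZ$), not of its hyper-extension ${}^*\Delta(E)$. Since $B^*\cap T\subseteq\Delta(E)$, what you actually get is $\mu+(B^*\cap T)\subseteq{}^*(A-B)$ with $B^*,T$ standard; the claim $(\mu+{}^*B^*)\cap(\mu+{}^*T)\subseteq{}^*(A-B)$ is strictly stronger and does not follow. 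Consequently the appeal to ``${}^*T$ contains an infinite interval $I$'' is not usable, because $I$ need not lie inside $\mu+T$ (a set that contains no infinite intervals). Fortunately the conclusion you draw is still true and is reached more simply \emph{without} nonstandard analysis: $T$ is thick, so for each $n$ pick an interval $J\subseteq T$ with $|J|\ge n$; then $B^*\cap J$ is a finite subset of $\Delta(E)$, so $\Delta(E)\lhd A-B$ yields a shift $t_J$ with $t_J+(B^*\cap J)\subseteq A-B$, i.e.\ $(t_J+B^*)\cap(t_J+J)\subseteq A-B$ with $t_J+J$ an interval of length $\ge n$. With that repair, your final descent (fixing finitely many frequencies of $B^*$, passing to a subsequence of the $t_J$ whose images in $\mathbb{T}^m$ cluster, and absorbing the approximation error into a shrunken Bohr neighborhood $B^{**}$) is sound, though one should project to the torus determined by the frequencies of $B^*$ rather than speak of convergent \emph{subsequences} in the nonmetrizable $b\Z$.
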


\begin{proof}
By Theorem \ref{theoremthree}, we can pick a set $E\subseteq\N$
with $\sigma(E)\ge\alpha\beta>0$ and such that $\Delta(E)\lhd_d A-B$.
Then apply Proposition 4.1 of \cite{bbf}, where it was shown
that if $\Delta(E)\lhd A-B$ for some set
$E$ of positive upper Banach density then $A-B$ is piecewise Bohr.
\end{proof}

\medskip
As a final remark, we point out that the nonstandard methods used in this
paper for sets of integers, also work in more abstract
settings. In consequence, many of the results presented here
can be extended to the general framework of \emph{amenable groups}
(see \cite{dl}).

\bigskip
\textbf{Acknowledgments.}
I thank Vitaly Bergelson for his MiniDocCourse
held in Prague and Borov\'a Lada (Czech Republic) in January 2007:
those lectures were mostly responsible for my interest in
combinatorics of numbers; a grateful thank is due to the organizer
Jaroslav N\u{e}set\u{r}il, who generously supported my participation.
I also thank Mathias Beiglb\"ock for useful discussions
about the original proof of Jin's theorem, and for pointing out
the problem of finding a bound to the number
of shifts $(A-B)+t$ which are needed to cover a thick set.

\bigskip


\begin{thebibliography}{}

\bibitem{bbf}
\textsc{M.~Beiglb\"ock, V.~Bergelson and A.~Fish},
Sumset phenomenon in countable amenable groups,
\emph{Adv. Math.} \textbf{223}, pp.~416--432, 2010.

\bibitem{bei}
\textsc{M.~Beiglb\"ock},
An ultrafilter approach to Jin's theorem, \emph{Isreal J. Math.} \textbf{185},
pp.~369--374, 2011.

\bibitem{bdf}
\textsc{V.~Benci, M.~Di Nasso and M.~Forti},
The eightfold path to nonstandard analysis, in
\emph{Nonstandard Methods and Applications in Mathematics}
(N.J.~Cutland, M.~Di Nasso and D.A.~Ross, eds.),
Lecture Notes in Logic \textbf{25}, Association for Symbolic Logic,
A K Peters, Wellesley - Massachusetts, pp.~3--44, 2006.

\bibitem{be1}
\textsc{V.~Bergelson}, Sets of recurrence of $\Z^m$-actions
and properties of sets of differences in $\Z^m$,
\emph{J. Lond. Math. Soc.} \textbf{31}, pp.~295--304, 1985.

\bibitem{bh}
\textsc{V.~Bergelson and N.~Hindman},
Nonmetrizable topological dynamics and Ramsey theory,
\emph{Trans. Amer. Math. Soc.}
\textbf{320}, pp.~293--320, 1990.

\bibitem{be2}
\textsc{V.~Bergelson},
Ergodic Ramsey Theory -- an update, in
\emph{Ergodic Theory of $\Z^d$-Actions} (Warwick 1993-94),
London Mathematical Society Lecture Note Ser. \textbf{228},
Cambridge University Press, Cambridge, pp.~1--61, 1996.

\bibitem{bl}
\textsc{V.~Bergelson and A.~Leibman},
Polynomial extensions of van der Waerden's and
Szemeredi's theorems, \emph{J. Amer. Math. Soc.} \textbf{9},
pp.~725-753, 1996.

\bibitem{behl}
\textsc{V.~Bergelson, P.~Erd\"os, N.~Hindman and
T.~Luczak}, Dense difference sets and their combinatorial structure,
in \emph{The Mathematics of Paul Erd\"os} (I.R.~Graham and
J.~Nesetril, eds.), Springer, Berlin, pp.~165--175, 1997.

\bibitem{bfw}
\textsc{V.~Bergelson, H.~Furstenberg and B.~Weiss},
Piece-wise sets of integers and combinatorial
number theory, in \emph{Topics in Discrete Mathematics},
Algorithms Combin. \textbf{26}, Springer, Berlin, pp.~13-–37, 2006.

\bibitem{br}
\textsc{V.~Bergelson and I.~Ruzsa},
Sumsets in difference sets, \emph{Isreal J. Math.} \textbf{174},
pp.~1--18, 2009.

\bibitem{bd}
\textsc{A.~Blass and M.~Di Nasso}, Finite embeddability of ultrafilters,
\emph{in preparation}.

\bibitem{ck}
\textsc{C.C.~Chang and H.J.~Keisler},
\emph{Model Theory} (3rd edition),
North-Holland, 1990.

\bibitem{dl}
\textsc{M.~Di Nasso and M.~Lupini},
Product sets and Delta-sets in amenable groups, ArXiv:1211.4208.

\bibitem{fur}
\textsc{H.~Furstenberg},
\emph{Recurrence in Ergodic Theory and Combinatorial Number Theory},
Princeton University Press, 1981.

\bibitem{gr}
\textsc{A.~Geroldinger and I.Z.~Ruzsa},
\emph{Combinatorial Number Theory and Additive Group Theory},
Birkh\"auser, 2009.

\bibitem{gol}
\textsc{R.~Goldblatt},
\emph{Lectures on the Hyperreals -- An Introduction
to Nonstandard Analysis}, Graduate Texts in Mathematics
\textbf{188}, Springer, New York, 1998.

\bibitem{gtt}
\textsc{G.~Grekos, V.~Toma and J.~Tomanova},
A note on uniform or Banach density,
\emph{Ann. Math. Blaise Pascal} \textbf{17}, pp.~153--163, 2010.

\bibitem{gri}
\textsc{J.T.~Griesmer}, Sumsets of dense sets and sparse sets,
\emph{Isreal J. Math.} \textbf{190}, pp.~229-252, 2012.

\bibitem{hs}
\textsc{N.~Hindman and D.~Strauss},
\emph{Algebra in the Stone-\v Cech Compactification},
Walter de Gruyter, Berlin, 1998.

\bibitem{hir}
\textsc{J.~Hirshfeld}, Nonstandard combinatorics,
\emph{Studia Logica} \textbf{47}, pp.~221--232, 1988.

\bibitem{jin1}
\textsc{R.~Jin}, The sumset phenomenon, \emph{Proc. Amer. Math. Soc.}, \textbf{130},
pp.~855--861, 2002.

\bibitem{jin2}
\textsc{R.~Jin}, Freiman's inverse problem with small doubling property,
\emph{Adv. Math.} \textbf{216}, pp.~711--752, 2007.

\bibitem{jin3}
\textsc{R.~Jin}, Characterizing the structure of A+B when A+B has
small upper Banach density, \emph{J. Number Theory} \textbf{130}, pp.~1785--1800, 2010.

\bibitem{jin4}
\textsc{R.~Jin}, Plunnecke's theorem for asymptotic densities,
\emph{Trans. Amer. Math. Soc.} \textbf{363}, pp.~5059--5070, 2011.

\bibitem{kei}
\textsc{H.J.~Keisler},
\emph{Elementary Calculus -- An Infinitesimal Approach} (2nd edition),
Prindle, Weber \& Schmidt, Boston, 1986.
(This book is now freely downloadable from the author's homepage:
\texttt{http://www.math.wisc.edu/\~{}keisler/calc.html}.)

\bibitem{let}
\textsc{S.C.~Leth},
Applications of nonstandard models and Lebesgue measure to sequences
of natural numbers, \emph{Trans. Amer. Math. Soc.}
\textbf{307}, pp.~457--468, 1988.

\bibitem{lm}
\textsc{N.~Lyall and A.~Magyar},
Polynomial configurations in difference sets,
\emph{J. Number Theory} \textbf{129}, pp.~439--450, 2009.

\bibitem{ru1}
\textsc{I.Z.~Ruzsa}, On difference-sequences,
\emph{Acta Arith.} \textbf{25}, pp.~151--157, 1974.

\bibitem{ru2}
\textsc{I.Z.~Ruzsa}, On difference sets,
\emph{Studia Sci. Math. Hungar.} \textbf{13}, pp.~319--326, 1978.

\bibitem{rs}
\textsc{I.Z.~Ruzsa and T.~Sanders}, Difference sets and the primes,
\emph{Acta Arith.} \textbf{131}, pp.~281--301, 2008.

\bibitem{sl}
\textsc{K.D.~Stroyan and W.A.J.~Luxemburg},
\emph{Introduction to the Theory of Infinitesimals},
Academic Press, New York, 1976.

\bibitem{st1}
\textsc{C.L.~Stewart and R.~Tijdeman}, On infinite-difference sets,
\emph{Can. J. Math.} \textbf{31}, pp.~897--910, 1979.

\bibitem{st2}
\textsc{C.L.~Stewart and R.~Tijdeman}, On density-difference sets
of sets of integers, in \emph{Studies in Pure Mathematics to the Memory
of Paul Tur\'an} (P.~Erd\"os ed.), Birkh\"auser Verlag, pp.~701--710, 1983.


\bibitem{tv}
\textsc{T.~Tao and V.H.~Vu}, \emph{Additive Combinatorics},
Cambridge University Press, Cambridge, 2006.
\end{thebibliography}
\end{document}